\documentclass[11pt]{amsart}

\usepackage[utf8]{inputenc}
\usepackage{amsmath}
\usepackage{amsfonts}
\usepackage{amssymb}
\usepackage{amsthm}
\usepackage{csquotes}
\usepackage[english]{babel}
\usepackage[shortlabels]{enumitem}
\usepackage{lmodern}
\usepackage{cite}

\usepackage{color}

\newtheorem{theorem}{Theorem}[section]

\newtheorem{corollary}[theorem]{Corollary}
\newtheorem{lemma}[theorem]{Lemma}
\newtheorem{proposition}[theorem]{Proposition}

\theoremstyle{definition}

\theoremstyle{remark} \theoremstyle{remark}
\newtheorem{remark}[theorem]{Remark}
\newtheorem{example}[theorem]{Example}

\newtheorem*{theorem*}{Theorem}

\DeclareMathOperator{\im}{im}
\DeclareMathOperator{\id}{id}
\newcommand{\di}{d}

\newcommand{\summ}{\sum_{\alpha=1}^{s}}

\newcommand{\etaa}{\eta_{\alpha}}

\newcommand{\xib}{\xi_{\beta}}
\newcommand{\xia}{\xi_{\alpha}}

\newcommand{\ha}{h_{\alpha}}
\newcommand{\hb}{h_{\beta}}

\newcommand{\ka}{k_{\alpha}}
\newcommand{\cF}{{\mathcal F}}

\newcommand{\cL}{{\mathcal L}}
\newcommand{\mft}{{\mathfrak t}}

\newcommand{\RR}{{\mathbb R}}
\newcommand{\CC}{{\mathbb C}}
\newcommand{\SO}{{\mathrm{SO}}}

\numberwithin{equation}{section}

\begin{document}

\title[On the topology of metric $f$-$K$-contact manifolds]{On the topology of metric $f$-$K$-contact manifolds}

\author[O. Goertsches]{Oliver Goertsches}
 \address{Philipps Universit\"at Marburg, Fachbereich Mathematik und Informatik, Hans-Meerwein-Straße, 35032 Marburg, Germany}
 \email{goertsch@mathematik.uni-marburg.de}
 
\author[E. Loiudice]{Eugenia Loiudice}
 \email{loiudice@mathematik.uni-marburg.de}


\begin{abstract}
We observe that the class of metric $f$-$K$-contact manifolds, which naturally contains that of $K$-contact manifolds, is closed under forming mapping tori of automorphisms of the structure. We show that the de Rham cohomology of compact metric $f$-$K$-contact manifolds naturally splits off an exterior algebra, and relate the closed leaves of the characteristic foliation to its basic cohomology.
\end{abstract}

\subjclass[2010]{Primary 53C25, 53C15, Secondary 53D10, 32V05}

\keywords{metric $f$-$K$-contact manifolds, mapping torus, basic cohomology, Morse-Bott functions, foliations}

\maketitle

\section{Introduction}
An $f$-structure on a smooth manifold is a $(1,1)$-tensor $f$ of constant rank, satisfying $f^3+f=0$. This notion was introduced by Yano in \cite{yano} and generalizes both the notion of almost complex and of almost contact structure. The rank of $f$ is always even, and if maximal, then $f$ is either an almost complex or an almost contact structure (see \cite{yano} and Section~\ref{sec:preliminaries} for more details). $f$-structures with non-maximal rank (in particular with $\dim \ker (f)=2$) arise naturally when studying hypersurfaces of almost contact manifolds (see Blair--Ludden \cite{blair-ludden}). 

An analogue of Hermitian structures on almost complex manifolds and of contact metric structures on almost contact manifolds was introduced on the class of $f$-manifolds by Blair \cite{blair70}. A metric $f$-contact manifold is a $f$-manifold $(M^{2n+s},f)$ endowed with $s$ vector fields $\xi_1,\dots,\xi_s$, $s$ one forms $\eta_1,\dots,\eta_s$ and a Riemannian metric $g$ such that:
$$\etaa(\xib)=\delta_{\alpha}^{\beta}, \; f(\xia)=0, \; \etaa \circ f=0, \; f^2=-\id+\summ \etaa \otimes \xia, 
$$
$$d \etaa (X,Y)=g(X,fY),\; g(fX,fY)=g(X,Y)-\summ \etaa(X)\etaa(Y),$$
for every $\alpha, \beta \in \{1,\dots,s\}$ and  $X,Y \in TM$, where $\delta_\alpha^\beta$ is the Kronecker delta.

The Riemannian geometry of such manifolds was  studied intensively by various authors. We recall here some aspects of metric $f$-contact manifolds with $s\geq 2$ that are very different from the metric contact setting (i.e., when $s=1$).  
Blair \cite{blair70} showed that there are no $S$-manifolds (i.e., normal metric $f$-contact manifolds, see Section~\ref{sec:preliminaries}) $M^{2n+s}$ with $s\geq 2$ of constant strictly positive curvature.
Moreover Dileo--Lotta \cite{lotta-dileo} proved the non-existence of compact, simply connected, $S$-manifolds $M^{2n+s}$ with $s\geq 2$. 
Obviously the situation in the Sasakian setting (i.e., when $s=1$) is different.

\medskip
In Section~\ref{sec:cohomology} we prove a splitting theorem for the de Rham cohomology of metric $f$-$K$-contact manifolds, i.e., metric $f$-contact manifold $M$ whose characteristic vector fields $\xi_1,\dots,\xi_s$ are Killing. (For $s=1$ one obtains the well-known notion of a $K$-contact manifold.)
\begin{theorem}
For any compact metric $f$-$K$-contact manifold $M$ there is an isomorphism of $\Lambda(\RR^{s-1})$-algebras
\[
H^*(M)\cong \Lambda(\RR^{s-1})\otimes H^*(M,\cF_{s-1}).
\]
\end{theorem}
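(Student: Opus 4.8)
The plan is to reduce the statement to a Koszul-type splitting of the invariant de Rham complex, built on two structural observations about the characteristic vector fields. First I would record that $\xi_1,\dots,\xi_s$ commute. Since $\eta_\beta=g(\xi_\beta,\cdot)$ and each $\xi_\alpha$ is Killing, Cartan's formula gives $\mathcal L_{\xia}\etab=\iota_{\xia}d\etab$, and $(\iota_{\xia}d\etab)(Y)=d\etab(\xia,Y)=g(\xia,fY)=\etaa(fY)=0$ because $\etaa\circ f=0$; hence $0=\mathcal L_{\xia}\etab=g([\xia,\xib],\cdot)$, so $[\xia,\xib]=0$. Thus $\xi_1,\dots,\xi_s$ span an integrable distribution defining the characteristic foliation, and $\cF_{s-1}$ is the subfoliation spanned by $\xi_1,\dots,\xi_{s-1}$. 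The second observation is the key one: the relation $d\etaa(X,Y)=g(X,fY)$ has a right-hand side independent of $\alpha$, so all the $d\etaa$ coincide. Consequently the $1$-forms $\theta_\alpha:=\etaa-\eta_s$ for $\alpha=1,\dots,s-1$ are closed, and they satisfy $\theta_\alpha(\xib)=\delta_\alpha^\beta$ and $\mathcal L_{\xib}\theta_\alpha=0$ for $\beta\in\{1,\dots,s-1\}$.

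Next I would reduce to invariant forms. As $M$ is compact and the $\xia$ are Killing, their flows lie in a compact torus $T$ of isometries preserving the structure; averaging over $T$ shows that the inclusion of the subcomplex $\Omega^*(M)^{T}$ of invariant forms into $\Omega^*(M)$ is a quasi-isomorphism, so $H^*(M)$ may be computed from invariant forms. On this subcomplex the closed invariant forms $\theta_1,\dots,\theta_{s-1}$ together with the contractions $\iota_{\xi_1},\dots,\iota_{\xi_{s-1}}$ satisfy the Clifford-type relation $\iota_{\xia}(\theta_\beta\wedge\,\cdot\,)+\theta_\beta\wedge\iota_{\xia}=\theta_\beta(\xia)=\delta_\alpha^\beta$ for $\alpha,\beta\le s-1$. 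Fiberwise this forces a unique decomposition $\omega=\sum_{I\subseteq\{1,\dots,s-1\}}\theta_I\wedge\omega_I$, where $\theta_I=\theta_{\alpha_1}\wedge\dots\wedge\theta_{\alpha_k}$ and each $\omega_I$ is horizontal (killed by all $\iota_{\xia}$, $\alpha\le s-1$) and invariant, i.e. $\cF_{s-1}$-basic.

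Because $d\theta_\alpha=0$ and $d$ preserves $\cF_{s-1}$-basic forms, one has $d(\theta_I\wedge\omega_I)=(-1)^{|I|}\theta_I\wedge d\omega_I$, so the invariant complex is identified, as a differential graded algebra, with the tensor product $(\Lambda(\theta_1,\dots,\theta_{s-1}),0)\otimes(\Omega^*(M,\cF_{s-1}),d)$. Passing to cohomology — the Künneth theorem for a tensor product one of whose factors carries the zero differential — yields the asserted isomorphism. Since the decomposition respects wedge products and sends the generators of $\Lambda(\RR^{s-1})$ to the classes $[\theta_\alpha]$, the isomorphism is one of $\Lambda(\RR^{s-1})$-algebras, as claimed.

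The main obstacle I anticipate is not the algebra but the foliation-theoretic bookkeeping around the basic complex of $\cF_{s-1}$, whose leaves need not be closed. This is precisely why one first passes to $T$-invariant forms and then identifies $\cF_{s-1}$-basic forms with horizontal invariant forms; the care needed is in checking that this identification is compatible both with the exterior differential (so that $d\omega_I$ stays horizontal, which uses $\iota_{\xia}d\omega_I=\mathcal L_{\xia}\omega_I=0$ for invariant horizontal $\omega_I$) and with the averaging, so that the tensor-product identification of complexes is genuinely $d$-equivariant rather than merely holding fiberwise.
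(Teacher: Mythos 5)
Your argument is correct, but it takes a genuinely different route from the paper's. The paper proves the theorem homologically: Proposition \ref{prop:gysinsequences} sets up Gysin-type short exact sequences $0\to H^*(M,\cF_{k+1})\to H^*(M,\cF_{k})\to H^{*-1}(M,\cF_{k+1})\to 0$ for $k\le s-2$ (the connecting homomorphism $[\sigma]\mapsto[\omega\wedge\sigma]$ vanishes because $\omega=d\eta_s$ is exact in $\Omega^*(M,\cF_{k+1})$ when $k<s-1$); injectivity of the algebra map $\Lambda(\RR^{s-1})\otimes H^*(M,\cF_{s-1})\to H^*(M)$ is then verified by applying $i_{\xi_{s-1}}\circ\cdots\circ i_{\xi_1}$ to a $T$-invariant representative of $[(\eta_1-\eta_s)\wedge\cdots\wedge(\eta_{s-1}-\eta_s)\wedge\sigma]$, and surjectivity follows from the dimension count $\dim_\RR H^*(M)=2^{s-1}\dim_\RR H^*(M,\cF_{s-1})$ forced by the exact sequences. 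You instead split the invariant de Rham complex at the cochain level as a tensor product of DGAs, using that the $\theta_\alpha=\eta_\alpha-\eta_s$ are closed and satisfy the Koszul relations $\{\iota_{\xi_\alpha},\theta_\beta\wedge\}=\delta_\alpha^\beta$; this is essentially the Chevalley-type splitting that the authors themselves sketch in the remark following the theorem (citing Greub--Halperin--Vanstone and an algebraic connection in the sense of Cappelletti-Montano--De Nicola--Marrero--Yudin), though there for the foliation spanned by $\{\sum_\alpha a_\alpha\xi_\alpha\mid\sum_\alpha a_\alpha=0\}$ rather than $\cF_{s-1}$. Your route delivers the multiplicative isomorphism directly and is self-contained; the paper's route produces along the way the intermediate Gysin sequences for all the $\cF_k$ and the long exact sequence relating $H^*(M,\cF)$ and $H^*(M,\cF_{s-1})$, which are reused in the proof of Theorem \ref{thm:closedleaves}. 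One small point to tighten in your write-up: after averaging over the full torus $T$, your horizontal invariant forms constitute $\Omega^*(M,\cF_{s-1})^{T}$ rather than $\Omega^*(M,\cF_{s-1})$; either average only over the closure of the flows of $\xi_1,\ldots,\xi_{s-1}$ (whose invariant horizontal forms are exactly the $\cF_{s-1}$-basic ones), or add the observation, as the paper does in the proof of Proposition \ref{prop:gysinsequences}, that the averaging operator restricts to the basic subcomplex and makes $\Omega^*(M,\cF_{s-1})^{T}\subset\Omega^*(M,\cF_{s-1})$ a quasi-isomorphism.
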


Here $\cF_{s-1}$ denotes the Riemannian foliation on $M$ determined by the Killing vector fields $\xi_1,\dots,\xi_{s-1}$, and $H^*(M,\cF_{s-1})$ the associated basic cohomology. The above mentioned result from \cite{lotta-dileo} is a direct consequence of this.

\medskip

In Section~\ref{sec:mappingtori} we describe a new method to construct examples of (compact) metric $f$-($K$-)contact manifolds.
Starting from any metric $f$-contact manifold $M$, we construct explicitly a metric $f$-contact structure on the mapping torus $M_\phi$ of any automorphism $\phi$ of the metric $f$-contact structure on $M$. This construction respects the subclasses of metric $f$-$K$-contact manifolds and of $S$-manifolds. We remark that this behavior is quite unusual; indeed, most geometric classes of manifolds are not preserved by forming mapping tori of automorphisms, see Remark \ref{rem:formingmappingtorus}.

\medskip
In Section~\ref{sec:Morse} and \ref{sec:closedleaves} we apply results from  \cite{GoertschesToeben} to relate the closed leaves of the characteristic foliation $\cF$ given by the characteristic vector fields $\xi_1,\dots,\xi_s$ on a metric $f$-$K$-contact manifold $M$ to the basic cohomology $H^*(M,\cF)$. We generalizes results of \cite{GoeNozToe} in the $K$-contact case. The main tool is the torus $T$ given by the closure of the flows of the characteristic vector fields in the isometry group of $M$, and a $T$-invariant Morse-Bott function $S$ whose critical set $C$ is equal to the union of closed leaves of $\cF$. This function generalizes a generic component of the contact momentum map in the $K$-contact setting, see \cite[Section 4]{rukimbira}. We obtain:
\begin{theorem}
 We have $\dim_{\RR} H^*(M,\cF) = \dim_{\RR} H^*(C,\cF)$. If $C$ consists of only finitely many  closed leaves of $\cF$, then $\dim_{\RR} H^*(M,\cF)$ is equal to the number of closed leaves of $\cF$.
\end{theorem}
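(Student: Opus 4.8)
The plan is to read the first equality as the statement that the $T$-invariant Morse--Bott function $S$ is \emph{perfect} for the basic cohomology $H^*(M,\cF)$, and the second as a short computation of the basic cohomology of the critical set. Throughout I would use the facts established in Section~\ref{sec:Morse}: since the $\xi_\alpha$ are Killing, $\cF$ is a Killing (in particular Riemannian) foliation whose leaf closures are exactly the orbits of the torus $T$; the function $S$ is $T$-invariant and hence basic; and its critical set $C$ is the union of the closed leaves of $\cF$. Applying the Morse--Bott theory for basic cohomology of Killing foliations from \cite{GoertschesToeben} to $S$, and writing $C=\bigsqcup_i C_i$ with $\lambda_i$ the transverse Morse index of $C_i$, one obtains basic Morse--Bott inequalities, namely that
\[
\sum_i t^{\lambda_i}\,P_t(C_i,\cF)-P_t(M,\cF)
\]
is $(1+t)$ times a polynomial with nonnegative coefficients, where $P_t(\,\cdot\,,\cF)$ is the basic Poincar\'e polynomial. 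Evaluating at $t=1$ gives $\dim_{\RR} H^*(M,\cF)\le\dim_{\RR} H^*(C,\cF)$, so the content of the theorem is that equality holds, i.e. that $S$ is perfect.

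The crucial step is therefore the perfection of $S$, and here I would exploit the transverse geometry. On the transverse distribution $\bigcap_\alpha\ker\etaa$ the tensor $f$ restricts to an almost complex structure compatible with the transverse symplectic forms $d\etaa$, and $T$ acts transversely symplectically with $S$ a component of a transverse momentum map. Since the negative normal bundle of each $C_i$ is $\cF$- and $T$-invariant and inherits this transverse complex structure, each index $\lambda_i$ is even and, more importantly, the equivariant Euler class of the negative normal bundle is a non--zero--divisor. This is exactly the input needed to run the Atiyah--Bott--Kirwan completion argument inside the equivariant basic cohomology of \cite{GoertschesToeben}, which, together with equivariant formality of $(M,\cF)$ with respect to $T$, forces the basic Morse--Bott inequalities to be equalities; this is the transverse analogue of the argument carried out in the $K$-contact case in \cite{GoeNozToe}. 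Setting $t=1$ in the resulting equality of basic Poincar\'e polynomials yields $\dim_{\RR} H^*(M,\cF)=\dim_{\RR} H^*(C,\cF)$.

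For the second statement, suppose $C$ consists of finitely many closed leaves $L_1,\dots,L_N$. Each $L_i$ is then a single leaf of $\cF$, so the restricted foliation $\cF|_{L_i}$ has $L_i$ as its unique leaf, and hence $H^*(L_i,\cF|_{L_i})\cong H^*(\mathrm{pt})=\RR$, concentrated in degree $0$. Therefore $\dim_{\RR} H^*(C,\cF)=\sum_{i=1}^{N}\dim_{\RR} H^*(L_i,\cF|_{L_i})=N$, and combining this with the first part gives $\dim_{\RR} H^*(M,\cF)=N$, the number of closed leaves.

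I expect the main obstacle to be precisely the perfection of $S$: one must verify that the hypotheses of the basic Morse--Bott and equivariant localization machinery of \cite{GoertschesToeben} genuinely apply, namely transverse orientability of the negative normal bundles, compatibility of the transverse complex structure with the Hessian of $S$, and equivariant formality of $(M,\cF)$ with respect to $T$, so that the even-index / non--zero--divisor phenomenon upgrades the Morse--Bott inequalities to equalities. Once this is in place, everything else is either recalled from Section~\ref{sec:Morse} or a short formal computation.
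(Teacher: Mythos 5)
Your proposal follows essentially the same route as the paper: the paper invokes Proposition~\ref{prop:invariantmorsebott} together with \cite[Theorems 6.3 and 6.4]{GoertschesToeben} to get the identity $P_t(M,\cF)=\sum_N t^{\lambda_N}P_t(N,\cF)$ and then evaluates at $t=1$, with each isolated closed leaf contributing $\RR$ in degree zero exactly as you say. The only difference is that you unpack the perfection argument (Morse--Bott inequalities plus the Atiyah--Bott--Kirwan/localization upgrade) that the paper leaves inside the cited reference, which is a faithful account of that machinery rather than a new idea.
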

We prove moreover the following
\begin{theorem}
The characteristic foliation of a compact metric $f$-$K$-contact manifold $M^{2n+s}$ has at least $n+1$ closed leaves.  If it has only finitely many closed leaves, then the following conditions are equivalent:
\begin{itemize}
\item The number of closed leaves of $\cF$ is $n+1$.
\item The basic cohomology $H^*(M,\cF)$ is that of $\CC P^n$, i.e., 
\[
H^*(M,\cF)= \RR[\omega]/([\omega^{n+1}]).
\]
\item The basic cohomology $H^*(M,\cF_{s-1})$ is that of a $2n+1$-dimensional sphere.
\item $M$ has the real cohomology ring of $S^{2n+1}\times T^{s-1}$.
\end{itemize}
\end{theorem}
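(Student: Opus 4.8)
The plan is to read all four statements off the structure of the basic cohomology ring $H^*(M,\cF)$, using the transverse symplectic class together with the two theorems stated above. Throughout I write $\omega$ for the basic $2$-form $\omega(X,Y)=g(X,fY)=d\etaa(X,Y)$, which is independent of $\alpha$. Since the axioms give $g(\cdot,\xia)=\etaa$ and $\etaa\circ f=0$, one checks $\iota_{\xia}\omega=0$ and $\cL_{\xia}\omega=0$, so that $[\omega]\in H^2(M,\cF)$; moreover the $\xia$ lie in the Lie algebra of the torus $T$ obtained as the closure of their flows, hence commute, and $\cF$ is the orbit foliation of the resulting isometric $\RR^s$-action, of transverse dimension $2n$. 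The first step is the lower bound, for which I would show $[\omega]^n\neq 0$ in $H^{2n}(M,\cF)$. The form $\omega^n\wedge\eta_1\wedge\cdots\wedge\eta_s$ is a volume form on $M$, so its integral is nonzero; on the other hand, for any basic $(2n-1)$-form $\beta$ the product $\beta\wedge\omega$ is basic of degree $2n+1$ and therefore vanishes, so an integration by parts gives $\int_M d\beta\wedge\eta_1\wedge\cdots\wedge\eta_s=\int_M\beta\wedge d(\eta_1\wedge\cdots\wedge\eta_s)=0$, each summand of $d(\eta_1\wedge\cdots\wedge\eta_s)$ carrying a factor $\omega$. Thus $\omega^n$ is not basic-exact, so $1,[\omega],\dots,[\omega]^n$ are nonzero and, lying in distinct degrees, linearly independent, whence $\dim_\RR H^*(M,\cF)\geq n+1$. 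If $\cF$ has infinitely many closed leaves the bound is trivial; if only finitely many, the preceding theorem identifies their number with $\dim_\RR H^*(M,\cF)\geq n+1$.

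Assume from now on that $\cF$ has finitely many closed leaves, so their number $N$ equals $\dim_\RR H^*(M,\cF)$. The equivalence of the first two conditions is then immediate: $N=n+1$ forces the $n+1$ independent classes $1,[\omega],\dots,[\omega]^n$ to be a basis, and as $[\omega]^{n+1}$ lies in degree $2n+2>2n$ it vanishes, so the algebra map $\RR[\omega]/([\omega]^{n+1})\to H^*(M,\cF)$ is an isomorphism; conversely $\RR[\omega]/([\omega]^{n+1})$ has dimension $n+1$. To pass to the third condition I would use the Gysin sequence of the transverse circle bundle given by the flow of $\xi_s$, for which $\eta_s$ is $\cF_{s-1}$-basic with curvature $d\eta_s=\omega$, namely
\[
\cdots \to H^{p-2}(M,\cF)\xrightarrow{\wedge[\omega]} H^{p}(M,\cF)\to H^{p}(M,\cF_{s-1})\to H^{p-1}(M,\cF)\xrightarrow{\wedge[\omega]} H^{p+1}(M,\cF)\to\cdots.
\]
If $H^*(M,\cF)=\RR[\omega]/([\omega]^{n+1})$, then $\wedge[\omega]$ is an isomorphism $H^{2k}\to H^{2k+2}$ for $0\le k\le n-1$, and the sequence collapses to leave $H^*(M,\cF_{s-1})$ concentrated in degrees $0$ and $2n+1$ with rank one, i.e.\ the cohomology of $S^{2n+1}$. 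Conversely, the vanishing of $H^{p}(M,\cF_{s-1})$ for $0<p<2n+1$ forces $\wedge[\omega]\colon H^{q}(M,\cF)\to H^{q+2}(M,\cF)$ to be an isomorphism for $0\le q\le 2n-2$, and starting from $H^0=\RR$ this rebuilds $H^*(M,\cF)=\RR[\omega]/([\omega]^{n+1})$.

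Finally, the third and fourth conditions are linked by the splitting isomorphism $H^*(M)\cong\Lambda(\RR^{s-1})\otimes H^*(M,\cF_{s-1})$ of $\Lambda(\RR^{s-1})$-algebras. If $H^*(M,\cF_{s-1})\cong H^*(S^{2n+1})$, the right-hand side becomes $\Lambda(\RR^{s-1})\otimes H^*(S^{2n+1})=H^*(S^{2n+1}\times T^{s-1})$ as graded rings. Conversely, comparing Poincaré polynomials in the splitting isomorphism shows that $H^*(M)$ having the Poincaré polynomial of $S^{2n+1}\times T^{s-1}$ forces $H^*(M,\cF_{s-1})$ to have Poincaré polynomial $1+t^{2n+1}$; a connected graded algebra with this polynomial necessarily carries the ring structure of $S^{2n+1}$, giving the third condition.

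I expect the main obstacle to lie not in these essentially formal cohomological arguments but in the foliated input that feeds them: establishing rigorously, in the transverse/basic category, that $\omega^n$ represents a nonzero top basic class and that the Gysin sequence relating $\cF_{s-1}$ to $\cF$ is available and exact (the latter depending on $\eta_s$ being a genuine basic connection form for the $\xi_s$-flow). Once these transverse tools are secured, together with the splitting theorem and the preceding theorem on closed leaves, the four conditions follow as above.
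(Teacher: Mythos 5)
Your proposal is correct and follows essentially the same route as the paper: the lower bound comes from the nonvanishing of $1,[\omega],\dots,[\omega]^n$ in $H^*(M,\cF)$ via integration against the relatively closed form $\eta_1\wedge\cdots\wedge\eta_s$ (the paper's Lemma~\ref{lem:nontrivialelementsbasiccohom}) combined with the identification of $\dim H^*(M,\cF)$ with the number of closed leaves (Corollary~\ref{cor:basiccohomclosedleaves}), the second and third conditions are linked by the Gysin-type sequence of Proposition~\ref{prop:gysinsequences}, and the third and fourth by the splitting of Theorem~\ref{thm:cohomproduct}. The transverse ingredients you flag as potential obstacles are exactly the ones the paper supplies in Sections~\ref{sec:cohomology}--\ref{sec:closedleaves}, so no gap remains.
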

As a consequence we obtain that any automorphism of the $K$-contact structure on a $K$-manifold $M^{2n+1}$ which has exactly $n+1$ closed orbits sends every closed Reeb orbit to itself (see Corollary~\ref{cor:K}).\\

\noindent {\it Acknowledgements.} We thank Antonio De Nicola for valuable comments on a previous version of the paper.

\section{metric $f$-manifolds}\label{sec:preliminaries}

A \emph{$f$-structure} on a smooth manifold $M^{2n+s}$ is a $(1,1)$ tensor $f$ of constant rank and such that $f^3+f=0$. Given such a structure, the tangent bundle of $M$ splits into two complementary subbundles $\im(f)$ and $\ker(f)$; moreover
$$
f^2|_{\im (f)}=-\id_{\im (f)},
$$
and thus the rank of $f$ is even, say $2n$ (cf. \cite{yano}).
If $\ker (f)$ is parallelizable then we fix $s$ global vector fields $\xi_1,\dots\xi_s$ on $M$ which span the kernel of $f$. Let $\eta_1,\dots,\eta_s$ be the $1$-forms determined by 
$$\etaa(\xib)=\delta_{\alpha}^{\beta}, \; \etaa \circ f=0.$$ 
Then we have:
\begin{equation*}
 f^2=-\id+\summ \etaa \otimes \xia.
\end{equation*}

In particular for $s=0$ or $s=1$ we have that $M^{2n+s}$ is an almost complex or respectively an almost contact manifold. If in addiction the structure tensors $f,\xia,\etaa$ satisfy the \emph{normality} condition: 
\begin{equation*}
 [f,f]+2\summ d\etaa \otimes \xia =0,
\end{equation*}
where $[f,f]$ denotes the Nijenhuis torsion of $f$, then $(M,f,\xia,\etaa)$ is called \emph{normal}, and for $s=0$ or $s=1$ we have that $M$ is a complex manifold or respectively a normal almost contact manifold.

It is well-known that a manifold $M^{2n+s}$ admitting an $f$-structure with parallelizable kernel always admits a \emph{compatible} metric, that is a Riemannian metric $g$ satisfying
\begin{equation*} 
 g(fX,fY)=g(X,Y)-\summ \etaa(X)\etaa(Y),
\end{equation*}
for every $X,Y \in TM$. The manifold $M^{2n+s}$ together with the structure tensors $(f,\xia,\etaa,g)$ as above is called a \emph{metric $f$-manifold}, and the $2$-form defined by:
$$
\omega(X,Y):=g(X,fY), \; X,Y \in TM
$$
is the \emph{fundamental $2$-form} of $M^{2n+s}$. 
A \emph{metric $f$-contact manifold} is a metric $f$-manifold $(M^{2n+s},f,\xia,\etaa,g)$ with $s>0$ such that 
\begin{equation*}
 d \etaa =\omega, 
\end{equation*}
for every  $\alpha \in \{1,\dots,s\}$. If a metric $f$-contact manifold is normal, then it is called a \emph{$S$-manifold}.

We observe that for $s=1$, the notion of metric $f$-contact manifold (resp.\ $S$-manifold) coincides with the notion of contact metric manifold (resp.\ Sasakian manifold).

\medskip 

We remark that one can construct metric $f$-contact structures on a manifold $M^{2n+s}$ starting from $s$ one-forms $\etaa$ on $M^{2n+s}$ satisfying some non-degeneracy condition \cite[Theorem 3.1]{diterlizzi}:
\begin{theorem}
Let $M$ be a smooth manifold of dimension $2n+s$ admitting $s$ one-forms $\eta_1,\dots,\eta_s$ such that 
 $d \eta_1= \dots =d \eta_s$
 is a $2$-form of constant rank $2n$ and 
$$\eta_1 \wedge \dots \wedge \eta_s \wedge (d \eta_1)^n\neq 0.$$ Then there exists a metric $f$-contact structure $(f,\xi_1,\dots,\xi_s,\eta_1,\dots,\eta_s,g)$ on $M$, where $\xi_1,\dots,\xi_s$ are the unique vector fields on $M$ such that $\etaa(\xib)=\delta_{\alpha}^{\beta}$ and $i_{\xib}d\etaa=0$ for every $\alpha,\beta \in \{1,\dots,s\}$.
 \end{theorem}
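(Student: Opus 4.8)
The plan is to reconstruct all the structure tensors directly from the given $1$-forms, mimicking the compatible-metric construction familiar from contact geometry. Set $\Omega := d\eta_1 = \cdots = d\eta_s$. First I would produce the characteristic vector fields. Since $\Omega$ has constant rank $2n$ on the $(2n+s)$-dimensional manifold, its kernel $K := \ker\Omega$ is a distribution of rank $s$. The hypothesis $\eta_1\wedge\cdots\wedge\eta_s\wedge\Omega^n\neq 0$ forces, on the one hand, that $\Omega^n$ restricts to a volume form on any complement of $K$, and on the other hand that $\eta_1\wedge\cdots\wedge\eta_s$ is nonzero on $K$, i.e.\ the forms $\etaa$ restrict to a coframe on $K$. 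Consequently there are unique sections $\xia$ of $K$ with $\etaa(\xib)=\delta_\alpha^\beta$; being sections of $K$ they satisfy $i_{\xib}\Omega=0$, and conversely any vector fields meeting both conditions must lie in $K$ (from $i_{\xib}\Omega=0$) and so coincide with the $\xia$ by the coframe property, which gives the asserted uniqueness.

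Next I would introduce the horizontal distribution $\mathcal{H} := \bigcap_\alpha \ker\etaa$, of rank $2n$. Because $\etaa|_K$ is a coframe, $\mathcal{H}\cap K = 0$, and by dimension count $TM = \mathcal{H}\oplus K$. Restricted to $\mathcal{H}$ the form $\Omega$ is nondegenerate, since its radical is contained in $\mathcal{H}\cap\ker\Omega = \mathcal{H}\cap K = 0$; thus $(\mathcal{H},\Omega|_\mathcal{H})$ is a symplectic vector bundle. Choosing a compatible almost complex structure $J$ on it and extending by $J$ on $\mathcal{H}$ and $0$ on $K$ defines a $(1,1)$-tensor $f$. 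By construction $f^3+f=0$, $f(\xia)=0$ and $\etaa\circ f=0$, and a check on the two summands of $TM = \mathcal{H}\oplus K$ yields $f^2=-\id+\summ\etaa\otimes\xia$.

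Finally I would define $g$ by declaring the $\xia$ orthonormal, $\mathcal{H}\perp K$, and $g|_\mathcal{H}(X,Y):=\Omega(fX,Y)$, and then verify the two remaining axioms. The compatibility $g(fX,fY)=g(X,Y)-\summ\etaa(X)\etaa(Y)$ reduces, on the horizontal part, to the $f$-invariance of $\Omega|_\mathcal{H}$ (which makes $g|_\mathcal{H}$ symmetric and $f$-compatible) and is immediate on the $\xia$, where both sides equal $\delta_\alpha^\beta-\delta_\alpha^\beta=0$; the metric $f$-contact condition $d\etaa(X,Y)=g(X,fY)$, i.e.\ $\Omega=\omega$, reads on $\mathcal{H}$ as $g(X,fY)=\Omega(fX,fY)=\Omega(X,Y)$, again by the $f$-invariance of $\Omega|_\mathcal{H}$, while both sides vanish as soon as one argument lies in $K$. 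I expect the main technical point to be the global, smooth selection of the compatible complex structure $J$ on the symplectic vector bundle $(\mathcal{H},\Omega|_\mathcal{H})$: this is supplied by applying the polar decomposition to an auxiliary Riemannian metric, the space of compatible $J$ being contractible, and it is precisely here that one must fix the sign convention consistently so that $g$ comes out positive definite while producing $d\etaa=\omega$ rather than its negative.
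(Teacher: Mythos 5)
The paper does not prove this statement; it is quoted verbatim from Di Terlizzi \cite[Theorem 3.1]{diterlizzi}, so there is no internal proof to compare against. Your argument is correct and is exactly the standard construction (the $f$-analogue of \cite[Theorem 4.4]{blairbook}): the interior-product argument shows the $\etaa$ restrict to a coframe on $\ker(d\eta_1)$, giving existence and uniqueness of the $\xia$, and polar decomposition supplies the compatible $J$ on the symplectic bundle $\bigl(\bigcap_\alpha\ker\etaa,\ d\eta_1\bigr)$. The one genuine subtlety --- that $J$ must be chosen with the orientation making $g(X,X)=d\eta_1(fX,X)>0$ while still yielding $d\etaa(X,Y)=g(X,fY)$ --- you identify explicitly, and it is resolved simply by replacing $J$ with $-J$ if needed, so there is no gap.
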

This result generalizes the well known construction of contact metric structures on a odd-dimensional manifold endowed with a contact form, see for instance \cite[Theorem 4.4]{blairbook}.

 \medskip
In the following we recall some useful properties of metric $f$-contact manifolds obtained in \cite{cabrerizo-fern} and \cite{duggal-ianus-past}.
Let $(M, f, \xia, \etaa, g)$ be a metric $f$-contact manifold. Then the operators $$\ha:=\frac{1}{2}\mathcal{L}_{\xia}f, \; \alpha \in\{1,\dots,s\}$$
where $\mathcal{L}_{\xia}$ denotes the Lie derivative relative to $\xia$, are self-adjoint and anticommute with $f$. Moreover, for every $\alpha,\beta\in\{1,\dots s\}$ and $X\in TM$ we have
\begin{equation}\label{hxi}
 \ha \xib=0
\end{equation}
by \cite[Proposition 2.3]{cabrerizo-fern},
\begin{equation}\label{nabla f}
 \nabla_{\xia}f=0
\end{equation}
by \cite[Equation (2.4)]{cabrerizo-fern},
\begin{equation}\label{ab}
 [\xia,\xib]=0
\end{equation}
by \cite[Corollary 2.4]{cabrerizo-fern} and
\begin{equation}\label{nabla xi}
 \nabla_X \xib =-fX-f\hb X
\end{equation}
by \cite[Proposition~2.4]{duggal-ianus-past}.

\medskip

A metric $f$-contact manifold whose characteristic vector fields $\xi_1,\dots,\xi_s$ are Killing is called a \emph{metric $f$-$K$-contact} manifold. The following theorem is proved in \cite[Theorem 2.6]{cabrerizo-fern}.
\begin{theorem}\label{th killing}
 Let $(M,f,\xi_1,\dots,\xi_s,\eta_1,\dots,\eta_s,g)$ be a metric $f$-contact manifold. Then, for any $\alpha \in \{1,\dots,s\}$, the vector field $\xia$ is Killing if and only if $\ha=0$.
\end{theorem}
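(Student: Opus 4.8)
The plan is to use the Riemannian characterization of Killing fields: $\xia$ is Killing if and only if the endomorphism $X\mapsto \nabla_X\xia$ is skew-adjoint with respect to $g$, equivalently
\[
(\mathcal L_{\xia}g)(X,Y)=g(\nabla_X\xia,Y)+g(X,\nabla_Y\xia)=0\quad\text{for all }X,Y.
\]
Since the covariant derivative of the characteristic vector fields is given explicitly by \eqref{nabla xi}, namely $\nabla_X\xia=-fX-f\ha X$, the whole argument reduces to substituting this formula into the two terms above and simplifying by means of the algebraic properties of $f$ and $\ha$ already recorded.

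First I would note that $f$ is skew-adjoint, $g(fX,Y)=-g(X,fY)$, and that $g(X,\xib)=\etab(X)$; both follow from the compatibility of $g$ (for the second set $Y=\xib$ and use $f\xib=0$; for the first replace $X$ by $fX$ and use $f^2=-\id+\summ\etaa\otimes\xia$ together with $\etaa\circ f=0$). The contribution of the $-fX$ summand of \eqref{nabla xi} then cancels, since $-g(fX,Y)-g(X,fY)=0$. It remains to treat the $-f\ha X$ summand. Here I would use that $\ha$ is self-adjoint and anticommutes with $f$ to show that $f\ha$ is itself self-adjoint,
\[
g(f\ha X,Y)=-g(\ha X,fY)=-g(X,\ha fY)=g(X,f\ha Y),
\]
so that the two remaining terms reinforce rather than cancel, yielding $(\mathcal L_{\xia}g)(X,Y)=-2\,g(X,f\ha Y)$. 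By nondegeneracy of $g$ this vanishes identically precisely when $f\ha=0$, whence $\xia$ is Killing if and only if $f\ha=0$. I expect the main obstacle to be exactly this sign bookkeeping: one must verify that $f\ha$ is self-adjoint (so the two $\ha$-terms add) rather than skew, which hinges on combining all three facts---skew-symmetry of $f$, self-adjointness of $\ha$, and the anticommutation---in the right order.

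The remaining point, which I regard as the genuine content rather than the computation, is the equivalence $f\ha=0\Leftrightarrow\ha=0$. One implication is trivial. For the converse, the anticommutation $\ha f=-f\ha$ shows that $\ha$ preserves $\im(f)$, so together with the fact that $\ha$ kills $\ker(f)=\spn(\xi_1,\dots,\xi_s)$, which is exactly \eqref{hxi}, we get $\im(\ha)\subseteq\im(f)$. On the other hand $f\ha=0$ says $\im(\ha)\subseteq\ker(f)$. Since $TM=\im(f)\oplus\ker(f)$, it follows that $\im(\ha)\subseteq\im(f)\cap\ker(f)=0$, hence $\ha=0$, completing the proof.
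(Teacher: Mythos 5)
Your argument is correct and complete. Note that the paper itself offers no proof of this statement --- it simply cites \cite[Theorem 2.6]{cabrerizo-fern} --- so there is nothing internal to compare against; your computation is the standard one (it mirrors the contact metric case $s=1$, cf.\ \cite[Theorem 4.5]{blairbook}) and uses only facts already recorded in Section~\ref{sec:preliminaries}: the identity $\nabla_X\xia=-fX-f\ha X$, which holds for arbitrary metric $f$-contact manifolds and so introduces no circularity, together with self-adjointness of $\ha$, the anticommutation $\ha f=-f\ha$, and $\ha\xib=0$. All the sign bookkeeping checks out: $f$ is indeed skew-adjoint, $f\ha$ is self-adjoint, and hence $\mathcal L_{\xia}g=-2\,g(\cdot,f\ha\,\cdot)$ vanishes iff $f\ha=0$. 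Your reduction of $f\ha=0$ to $\ha=0$ via $\im(\ha)\subseteq\im(f)\cap\ker(f)=0$ is fine; a slightly quicker alternative is to apply $f$ once more: $0=f^2\ha=-\ha+\summ\etaa(\ha\,\cdot)\,\xia$, and $\etaa(\ha X)=g(\xia,\ha X)=g(\ha\xia,X)=0$, so $\ha=0$ directly.
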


Hence, if $(M,f,\xia,\etaa,g)$ is metric $f$-$K$-contact manifold, Equation \eqref{nabla xi} becomes 
\begin{equation}\label{Xxi}
 \nabla_X \xia =-fX.
\end{equation}
Using \eqref{Xxi} we conclude that the curvature tensor field $R$ of $M$ satisfies 
\begin{equation}\label{l}
 R(X,\xia)Y=\nabla_X\nabla_Y{\xia}-\nabla_{\nabla_XY}{\xia}=-(\nabla_Xf)Y,
\end{equation}
for each $X,Y\in TM$ and $\alpha \in \{1,\dots,s\}$, where we used\cite[Proposition~8.1.3]{petersen} for the first equality.

\section{Mapping tori of metric $f$-$K$-contact manifolds} \label{sec:mappingtori}

In \cite[Proposition 4.1]{lotta-dileo} it was shown that the product of a Sasakian manifold with an Abelian Lie group always admits the structure of an $S$-manifold. In this section we use the same idea to show that the classes of metric $f$-contact, metric $f$-$K$-contact, and $S$-manifolds are closed under forming the mapping torus with respect to automorphisms of the structure. We begin by describing an explicit induced structure on the product with the real line.

Let $(M^{2n+s},f,\xi_1,\dots,\xi_s, \eta_1,\dots,\eta_s, g)$ be a metric $f$-contact manifold with fundamental form $\omega$. On the product manifold $M\times\mathbb{R}$ we define a $(1,1)$ tensor $\bar{f}$ and $s+1$ one-forms $\bar{\eta}_{1},\dots ,\bar{\eta}_{s+1}$ by
 \begin{equation*}
 \begin{aligned}
  &\bar{f}(X)=f(X), \; \bar{f}\left(\frac{d}{dt}\right)=0, \\
  &\bar{\eta}_{\alpha}(X)=\eta_{\alpha}(X), \; \bar{\eta}_{\alpha}\left(\frac{d}{dt}\right)=0, \; \alpha=1,\dots s, \\
  &\bar{\eta}_{s+1}(X)=\frac{1}{s}(\eta_1(X)+\dots +\eta_s(X)), \; \bar{\eta}_{s+1}\left(\frac{d}{dt}\right)=1,\\
 \end{aligned}
 \end{equation*}
for each $X\in TM$ and where $\frac{d}{dt}$ denotes the standard coordinate vector field on $\mathbb{R}$.
 We have that $\bar{f}$ is an $f$-structure on $M\times \mathbb{R}$, $\im(\bar{f})=\bigcap_{\alpha}\ker \bar{\eta}_{\alpha}= \im(f)$, $d \bar{\eta}_{1}=\dots= d \bar{\eta}_{s+1}=\pi_1^*\omega=:\bar{\omega}$, where $\pi_1:M\times \mathbb{R}\to M$ is the projection on the first component. We  have $\bar{\omega}\wedge \bar{\eta}_1\dots \wedge \bar{\eta}_{s+1}\neq 0$.
The vector fields
\begin{equation*}
 \begin{aligned}
  &\bar{\xi}_{\alpha} :=\xia-\frac{1}{s}\frac{d}{dt}, \quad \alpha = 1,\dots,s, \\
  &\bar{\xi}_{s+1} :=\frac{d}{dt},
 \end{aligned}
\end{equation*}
are dual to $\bar{\eta}_{1}, \dots ,\bar{\eta}_{s+1}$ and generate the kernel of $\bar{f}$. We consider moreover the Riemannian metric $\bar{g}$ defined by
\begin{equation*}
\begin{aligned}
 & \bar{g}(X,Y)= g(X,Y), \; \bar{g}(X,\bar{\xi}_{\alpha})=0,\; \bar{g}(\bar{\xi}_{\alpha}, \bar{\xi}_{\beta})=\delta_{\alpha}^{\beta},
 \end{aligned}
\end{equation*}
for each  $X,Y \in \im(f)$ and ${\alpha}, {\beta} \in \{1,\dots,s+1\}$. It is easy to check that $(\bar{f}, \bar{\xi}_{1}, \dots,\bar{\xi}_{s+1}, \bar{\eta}_{1},\dots, \bar{\eta}_{s+1}, \bar{g})$ is a metric $f$-contact structure on $M\times \mathbb{R}$. 

\medskip

Let $V$ be a local vector field tangent to $\im (f)$. Observe that, for each $\alpha\in \{1,\dots,s\}$ 
\begin{equation*}
\begin{aligned}
 2\bar{h}_{\alpha}(V)&= \left[\xia - \frac{1}{s}\frac{d}{dt},\bar{f}V\right]-\bar{f}\left[\xia - \frac{1}{s}\frac{d}{dt},V\right]\\
                    &=[\xia ,{f}V]-f[\xia , V]\\
                    &=2\ha(V)\\
\end{aligned}
\end{equation*}
and 
\begin{equation*}
 \bar{h}_{s+1}(V)=0.
\end{equation*}
Then, using Theorem~\ref{th killing} and Equation \eqref{hxi},
we obtain that $(f,\xia,\etaa,g)$ is a metric $f$-$K$-contact structure on $M$ if and only if $(\bar{f}, \bar{\xi}_{\alpha},\bar{\eta}_{\alpha},\bar{g})$ is a metric $f$-$K$-contact structure on $M\times \mathbb{R}$.

\medskip

Now consider two local vector fields $V,W$ tangent to $\im (f)$ and $\beta \in \{1,\dots,s\}$. We have:
\begin{equation*}
\begin{aligned}
 ([\bar{f},\bar{f}]+2 \sum_{\alpha=1}^{s+1} d\bar{\eta}_{\alpha} \otimes \bar{\xi}_{\alpha})(V,W) &= [f,f](V,W)+2 \omega(V,W)({\xi}_1+\dots+{\xi}_{s})\\
                                                                                                                                                                                                    &= ([f,f]+2 \summ d \etaa \otimes \xia )(V,W),\\
([\bar{f},\bar{f}]+2 \sum_{\alpha=1}^{s+1} d\bar{\eta}_{\alpha} \otimes \bar{\xi}_{\alpha})(V,\bar{\xi}_{\beta}) &= \bar{f}^2[V,\bar{\xi}_{\beta}]-\bar{f}[\bar{f}V,\bar{\xi}_{\beta}]\\
                                                                                                                 &= \bar{f}^2[V,{\xi}_{\beta}]-\bar{f}[{f}V,{\xi}_{\beta}]\\
                                                                                                                 &= ([f,f]+2 \summ d \etaa \otimes \xia )(V,\xib),\\
([\bar{f},\bar{f}]+2 \sum_{\alpha=1}^{s+1} d\bar{\eta}_{\alpha} \otimes \bar{\xi}_{\alpha})(V,\bar{\xi}_{s+1})   &= \bar{f}^2[V,\bar{\xi}_{s+1}]-\bar{f}[\bar{f}V,\bar{\xi}_{s+1}] =0.                                                                                                        
\end{aligned}
\end{equation*}
Then, since $([\bar{f},\bar{f}]+2 \omega \otimes \sum_{\alpha=1}^{s+1}\bar{\xi}_{\alpha})(\bar{\xi}_{\beta},\bar{\xi}_{\gamma})= 0$, for each $\beta, \gamma \in \{1,\dots,s+1\}$, we have that $(M, f,\xia,\etaa,g)$ is a $S$-manifold if and only if $(M\times\mathbb{R}, \bar{f},\bar{\xi}_{\alpha},\bar{\eta}_{\alpha},\bar{g})$ is a $S$-manifold.

\medskip

Summarizing, if $(f,\xia,\etaa,g)$ is a metric $f$-$K$-contact structure (resp. $S$-structure) on $M$, then the induced structure tensors $(\bar{f}, \bar{\xi}_{\alpha},\bar{\eta}_{\alpha},\bar{g})$ on the product manifold $M\times \mathbb{R}$ determine a metric $f$-$K$-contact structure (resp. $S$-structure) on $M\times \mathbb{R}$.

\begin{remark} 
Another natural choice to construct a metric $f$-structure on the product manifold $M\times \mathbb{R}$, is to consider on $M\times \mathbb{R}$ the product metric 
$$\bar{g}:=\pi_1^*(g)+\pi_2^*(dt^2),$$ 
where $\pi_1$ and $\pi_2$ are the projections from $M\times \mathbb{R}$ on $M$ and $\mathbb{R}$ respectively,  and the tensors $\bar{f},\bar{\xi}_{\alpha}, \bar{\eta}_{\alpha}$ defined by:
\begin{equation*}
\begin{aligned}
 & \bar{f}(X)=f(X), \; \bar{f}\left(\frac{d}{dt}\right)=0, \; \bar{\xi}_{\alpha}=\xia, \; \bar{\xi}_{s+1}=\frac{d}{dt}, \\
 & \bar{\eta}_{\alpha}(X)=\etaa(X), \;  \bar{\eta}_{\alpha}\left(\frac{d}{dt}\right)=0, \; \bar{\eta}_{s+1}(X)=0, \;  \bar{\eta}_{s+1}\left(\frac{d}{dt}\right)=1,
\end{aligned}
\end{equation*}

for every $X\in TM$ and $\alpha \in \{1,\dots,s\}$. One can easily check that, if $(f,\xia,\etaa,g)$ is a metric $f$-contact structure on $M$, then $(\bar{f}, \bar{\xi}_{\alpha},\bar{\eta}_{\alpha},\bar{g})$ is a metric $f$-structure on $M\times \mathbb{R}$; however, since $d\eta_{s+1}=0$, it is not a metric $f$-contact structure.

The construction above, generalized to warped products, was used in \cite[Example 3.3]{carriazo-fernandez} to produce examples of generalized $S$-space forms from generalized Sasakian space-forms (see \cite{alegre-blair-carriazo}).  
\end{remark}

\medskip
We now show that the structure constructed above descends to mapping tori of automorphisms. Recall that, for a diffeomorphism $\phi:M\rightarrow M$ of a manifold $M$ the \emph{mapping torus} $M_{\phi}$ of $(M,\phi)$ is the quotient space $(M\times \mathbb{R})/\mathbb{Z}$, where the free and properly discontinuous $\mathbb{Z}$-action on the product space $M\times \mathbb{R}$ is given by
\begin{equation*}
 m \cdot (p,t)=(\phi^m(p),t+m).
\end{equation*}
Let now $(M,f,\xia,\etaa,g)$ be a metric $f$-contact manifold and $\phi:M\rightarrow M$ an automorphism of the metric $f$-structure.
 Observe that the diffeomorphism
 $$\rho_m: M\times \mathbb{R}\rightarrow M\times \mathbb{R}; \; (p,t)\mapsto (\phi^m(p),t+m),$$ 
 $ m\in \mathbb{Z}$, preserves the structure tensors $ \bar{f}, \bar{\xi}_{\alpha},\bar{\eta}_{\alpha},\bar{g}$ on $M\times \mathbb{R}$ defined above. It follows that the tensors $\bar{f}, \bar{\xi}_{\alpha},\bar{\eta}_{\alpha}, \bar{g}$ on $M\times \mathbb{R}$ descend to $M_{\phi}$, making it a metric $f$-contact manifold.    
We have moreover that, if $(M,f,\xia,\etaa,g)$ is a (compact) metric $f$-$K$-contact manifold (or a $S$-manifold), then $M_{\phi}$ with the induced structure is a (compact) metric $f$-$K$-contact manifold (or respectively a $S$-manifold).

\begin{remark}\label{rem:formingmappingtorus}
Most geometric classes of manifolds are not preserved by forming mapping tori of automorphisms. For instance, the mapping torus of a symplectomorphism of a symplectic manifold naturally is a cosymplectic manifold, and that of a holomorphic isometry of a K\"ahler manifold is a co-K\"ahler manifold (see \cite[Lemmata 1 and 4]{Li}). The mapping torus of a strict contactomorphism of a contact manifold is a locally conformally symplectic manifold \cite[Example 2.4]{BazzMarrero}, and an automorphism of a Sasakian manifold induces a Vaisman structure on the mapping torus. From this point of view, metric $f$-$K$-contact structures behave in a rather unusual way.
\end{remark}

\section{Cohomology of metric $f$-$K$-contact manifolds}\label{sec:cohomology}
Throughout this section, we consider a compact metric $f$-$K$-contact manifold $(M^{2n+s},f,\xia, \etaa, g)$ with $s\geq 2$. We recall that $\omega=d\etaa$ for all $\alpha=1,\ldots, s$. Our first observation is
\begin{lemma}
For all $\alpha\neq \beta$, the one-form $\eta_\alpha-\eta_\beta$ defines a nonzero element in $H^1(M)$. 
\end{lemma}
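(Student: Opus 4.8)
The plan is to show that $\eta_\alpha - \eta_\beta$ is closed but not exact. Closedness is immediate: since $d\eta_\alpha = \omega = d\eta_\beta$ for all indices (this is the defining metric $f$-contact condition), we have $d(\eta_\alpha - \eta_\beta) = \omega - \omega = 0$. So $\eta_\alpha - \eta_\beta$ determines a class in $H^1(M)$, and the content of the lemma is that this class is nonzero, i.e.\ $\eta_\alpha - \eta_\beta$ is not exact.

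**First I would** exploit the Killing/harmonicity structure. On a compact metric $f$-$K$-contact manifold, equation \eqref{Xxi} gives $\nabla_X \xi_\gamma = -fX$ for each $\gamma$, and the characteristic vector fields are Killing with $g(\xi_\gamma, \xi_\delta) = \delta_\gamma^\delta$ and $\eta_\gamma = \xi_\gamma^\flat$. The natural approach is to prove that each $\eta_\gamma$, hence each difference $\eta_\alpha - \eta_\beta$, is a \emph{harmonic} one-form with respect to $g$. By Hodge theory on the compact manifold $M$, a nonzero harmonic form represents a nonzero cohomology class, so it suffices to check $\eta_\alpha - \eta_\beta$ is harmonic and pointwise nonzero. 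Pointwise nonvanishing is clear: evaluating on $\xi_\alpha$ gives $(\eta_\alpha - \eta_\beta)(\xi_\alpha) = 1 - 0 = 1 \neq 0$. For harmonicity, I already have $d(\eta_\alpha-\eta_\beta)=0$, so I only need $\delta(\eta_\alpha - \eta_\beta) = 0$, equivalently that each $\eta_\gamma$ is coclosed, i.e.\ $\operatorname{div}\xi_\gamma = 0$. But $\xi_\gamma$ is Killing, and every Killing field on a Riemannian manifold is divergence-free (taking the trace of the skew-symmetric $\nabla \xi_\gamma$); concretely $\delta \eta_\gamma = -\operatorname{tr}(\nabla \xi_\gamma)$, and from $\nabla_X \xi_\gamma = -fX$ together with $\operatorname{tr} f = 0$ (as $f$ is skew-adjoint with respect to $g$, since $g(fX,Y) = -g(X,fY)$ follows from the compatibility and $\omega$ being a $2$-form) we get $\delta\eta_\gamma = 0$ directly.

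**An alternative** (and perhaps cleaner) route avoids invoking the full codifferential computation: since $\xi_\alpha - \xi_\beta$ is Killing and $\eta_\alpha - \eta_\beta$ is its metric dual, the form is coclosed because the metric dual of a Killing field is always coclosed on a compact manifold; combined with closedness this makes it harmonic, and a nonzero harmonic form is never exact. **The main obstacle** is really just the verification that $f$ is trace-free and skew-adjoint so that $\delta\eta_\gamma=0$; this is a short computation from the compatibility identity $g(fX,fY) = g(X,Y) - \sum_\gamma \eta_\gamma(X)\eta_\gamma(Y)$ and the fact that $\omega(X,Y) = g(X,fY)$ is alternating, which forces $g(fX,Y) = -g(X,fY)$, whence $\operatorname{tr} f = 0$. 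Once harmonicity is in hand, Hodge theory finishes the argument immediately.
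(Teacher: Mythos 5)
Your proof is correct, but it takes a genuinely different route from the paper. The paper's argument is completely elementary: it only observes that an exact one-form $dh$ on a compact manifold must vanish at a critical point of $h$, whereas $(\eta_\alpha-\eta_\beta)(\xi_\alpha)\equiv 1$ shows the form is nowhere zero --- no Hodge theory and, notably, no use of the Killing condition, so the paper's proof works on any compact metric $f$-contact manifold. Your argument instead establishes that $\eta_\alpha-\eta_\beta$ is harmonic (closed because $d\eta_\alpha=d\eta_\beta=\omega$, coclosed because it is the metric dual of the Killing field $\xi_\alpha-\xi_\beta$, using $\eta_\gamma=\xi_\gamma^\flat$ and the vanishing divergence of Killing fields, or equivalently $\operatorname{tr} f=0$), and then invokes Hodge theory on the compact $M$ to conclude that a nonvanishing harmonic form cannot be exact. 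This is heavier machinery and genuinely needs the $K$-contact hypothesis, but it buys you slightly more: you identify $\eta_\alpha-\eta_\beta$ as the harmonic representative of its class, which is extra structural information the paper's argument does not provide. Both proofs share the same final nondegeneracy input, namely $(\eta_\alpha-\eta_\beta)(\xi_\alpha)=1$ everywhere.
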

\begin{proof}
The one-form $\eta_\alpha-\eta_\beta$ is closed because $d\eta_\alpha=\omega$ for all $\alpha$. If it was exact, then $\eta_\alpha-\eta_\beta=dh$ for a real-valued function $h$ on $M$. As $M$ is compact, $h$ has a critical point, so that $\eta_\alpha-\eta_\beta$ has a zero. But $(\eta_\alpha-\eta_\beta)(\xia) = 1$ on all of $M$. 
\end{proof}

Let $M$ be a compact manifold. For a foliation $\cF$ on $M$, we will consider its basic cohomology $H^*(M,\cF)$, which is by definition the cohomology of the subcomplex
\[
\Omega(M,\cF)=\{\sigma\in \Omega(M)\mid i_{X}\sigma = \cL_{X}\sigma = 0 \textrm{ for all } X\in \Xi(\cF)\}
\]
of the de Rham complex $(\Omega(M),d)$, where $\Xi(\cF)$ denotes the space of vector fields tangent to $\cF$.

As the $\xi_\alpha$ are commuting Killing vector fields, they define an $s$-dimensional Riemannian foliation $\cF$ on $M$, which we call the \emph{characteristic foliation} of $M$.  We will also make use of the Riemannian foliations on $M$ spanned by the Killing vector fields $\xi_1,\ldots,\xi_k$, for $k=1,\ldots,s$, which we denote by $\cF_k$. The leaf dimension of $\cF_k$ is $k$; we have $\cF_s=\cF$, and we denote by $\cF_0$ the trivial foliation by points. 

Obviously, the leaves of $\cF_k$ are contained in those of $\cF_{k+1}$, for all $k=0,\ldots,s-1$.

\begin{proposition}\label{prop:gysinsequences}
We have short exact sequences
\[
0 \longrightarrow H^*(M,\cF_{k+1}) \longrightarrow H^*(M,\cF_{k}) \longrightarrow H^{*-1}(M,\cF_{k+1}) \longrightarrow 0,
\]
for all $k=0,\ldots,s-2$, as well as
\begin{align*}
\cdots \longrightarrow H^p(M,\cF)&\longrightarrow H^p(M,\cF_{s-1})\\
&\quad \longrightarrow H^{p-1}(M,\cF)\overset{\delta}\longrightarrow H^{p+1}(M,\cF)\longrightarrow \cdots, 
\end{align*}
where the connecting homomorphism $\delta$ is given by $\delta([\sigma])=[\omega\wedge \sigma]$.
\end{proposition}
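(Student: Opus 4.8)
The plan is to present each map $\Omega(M,\cF_{k+1})\hookrightarrow \Omega(M,\cF_{k})$ as the base inclusion of a Gysin-type sequence, in which $\eta_{k+1}$ plays the role of a connection form and $\omega=d\eta_{k+1}$ of its curvature. First I would record the elementary structural facts. From the compatibility relation with $X=\xi_\alpha$ (using $f\xi_\alpha=0$) one gets $\eta_\alpha=g(\xi_\alpha,\cdot)$, whence $i_{\xi_\beta}\omega(X)=g(\xi_\beta,fX)=\eta_\beta(fX)=0$, so $i_{\xi_\beta}\omega=0$ and therefore $\cL_{\xi_\beta}\eta_\alpha=i_{\xi_\beta}\omega+d\,\delta_\beta^\alpha=0$ for all $\alpha,\beta$. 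Consequently $\eta_{k+1}\in\Omega^1(M,\cF_k)$ with $i_{\xi_{k+1}}\eta_{k+1}=1$, while $\omega\in\Omega^2(M,\cF_{k+1})$ and $d\eta_{k+1}=\omega$. I would also introduce the torus $T$ given by the closure in $\mathrm{Isom}(M,g)$ of the flows of $\xi_1,\dots,\xi_s$; since $T$ is abelian, each $t^*$ fixes every $\xi_j$, hence commutes with all $i_{\xi_j}$ and $\cL_{\xi_j}$ and preserves every subcomplex $\Omega(M,\cF_j)$, and a standard averaging argument shows that the inclusion $\Omega(M,\cF_j)^T\hookrightarrow\Omega(M,\cF_j)$ is a quasi-isomorphism, so that $H^*(\Omega(M,\cF_j)^T)=H^*(M,\cF_j)$.

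The heart of the argument is then purely algebraic inside the $T$-invariant complexes. Every $\sigma\in\Omega(M,\cF_k)^T$ decomposes uniquely as $\sigma=\sigma_0+\eta_{k+1}\wedge\sigma_1$ with $\sigma_1=i_{\xi_{k+1}}\sigma$ and $\sigma_0=\sigma-\eta_{k+1}\wedge i_{\xi_{k+1}}\sigma$; one checks readily that $\sigma_0,\sigma_1\in\Omega(M,\cF_{k+1})^T$ (they are annihilated by $i_{\xi_{k+1}}$ and $\cL_{\xi_{k+1}}$, and $T$-invariance and $\cF_k$-basicness are inherited). Since $d\eta_{k+1}=\omega$, the differential reads $d\sigma=(d\sigma_0+\omega\wedge\sigma_1)+\eta_{k+1}\wedge(-d\sigma_1)$, so in the coordinates $(\sigma_0,\sigma_1)$ it is $(\sigma_0,\sigma_1)\mapsto(d\sigma_0+\omega\wedge\sigma_1,\,-d\sigma_1)$. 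This exhibits a short exact sequence of complexes
\[
0\longrightarrow \Omega(M,\cF_{k+1})^T \longrightarrow \Omega(M,\cF_{k})^T \overset{i_{\xi_{k+1}}}{\longrightarrow} \Omega(M,\cF_{k+1})^T[-1]\longrightarrow 0 .
\]
Passing to cohomology and identifying $H^*(\Omega(M,\cF_j)^T)=H^*(M,\cF_j)$ yields the long exact sequence; tracing a lift $(0,\sigma_1)$ of a $d$-closed $\sigma_1$ through $d$ gives $(\omega\wedge\sigma_1,0)$, so the connecting homomorphism is $\delta([\sigma])=[\omega\wedge\sigma]$, exactly as claimed. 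Taking $k=s-1$ (so $\cF_{k+1}=\cF$) gives the stated long exact sequence.

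Finally I would treat the case $k\le s-2$ and show the sequence degenerates into the asserted short exact sequences, i.e. that $\delta=\wedge[\omega]$ vanishes. The point is that for $k+1\le s-1$ the further form $\eta_{k+2}$ is available and is $\cF_{k+1}$-basic: indeed $i_{\xi_j}\eta_{k+2}=\delta_j^{k+2}=0$ and $\cL_{\xi_j}\eta_{k+2}=0$ for all $j\le k+1$, while $d\eta_{k+2}=\omega$. Hence $[\omega]=0$ in $H^2(M,\cF_{k+1})$, so $\delta=0$ and the long exact sequence breaks into
\[
0\longrightarrow H^p(M,\cF_{k+1})\longrightarrow H^p(M,\cF_{k})\longrightarrow H^{p-1}(M,\cF_{k+1})\longrightarrow 0 .
\]
I expect the only genuinely delicate points to be bookkeeping ones: verifying that the averaging projection is a $\cF_j$-basic chain homotopy equivalence (which rests on the commutativity of $T$ so that $t^*$ preserves basicness), and keeping the grading shift and signs consistent so that $\delta$ comes out as multiplication by $[\omega]$. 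The conceptual crux that distinguishes the two displayed statements is simply the (non)vanishing of $[\omega]$ in basic degree two, which holds precisely when a spare primitive $\eta_{k+2}$ survives to the $\cF_{k+1}$-basic complex, i.e. for $k\le s-2$ but not for $k=s-1$.
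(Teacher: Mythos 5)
Your proof is correct and follows essentially the same route as the paper's: the same Gysin-type short exact sequence of complexes built from $i_{\xi_{k+1}}$, the same averaging argument to replace basic forms by torus-invariant basic forms up to quasi-isomorphism, the same identification $\delta([\sigma])=[\omega\wedge\sigma]$ via the lift $\eta_{k+1}\wedge\sigma$, and the same observation that $\omega$ is exact in $\Omega^*(M,\cF_{k+1})$ for $k\le s-2$ (the paper uses the primitive $\eta_s$ where you use $\eta_{k+2}$). The only cosmetic difference is that the paper takes invariants under the smaller torus $T_{k+1}$ and only on the middle term of the sequence, while you take invariants under the full torus $T$ on all three terms; both variants work.
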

\begin{proof}
This follows from a variant of the Gysin sequence for pairs of foliations, whose proof is analogous to \cite[Proposition 7.2.1]{BoyerGalicki}: Consider, for any $k=0,\ldots,s-1$, the short exact sequence of complexes
\[
0\longrightarrow \Omega^*(M,\cF_{k+1})\longrightarrow \Omega^*(M,\cF_{k})^{T_{k+1}}\overset{i_{\xi_{k+1}}}\longrightarrow \Omega^{*-1}(M,\cF_{k+1})\longrightarrow 0,
\]
where $T_{k+1}$ is the torus defined as the closure of the flows of the Killing vector fields defining $\cF_{k+1}$, i.e., $\xi_1,\ldots,\xi_{k+1}$, in the isometry group of $M$. The first map in the sequence is the natural inclusion. One observes that the inclusion $\Omega^*(M,\cF_k)^{T_{k+1}}\subset \Omega^*(M,\cF_k)$ induces an isomorphism in cohomology. (It is shown in \cite[\S 9.1, Theorem 1]{onishchik} that the averaging operator $\Omega^*(M)\to \Omega^*(M)^{T_{k+1}}$ to $\Omega^*(M,\cF_k)$ induces an isomorphism in cohomology, and one can restrict this operator to $\Omega^*(M,\cF_k)$. In a slightly different context, this argument was used also in \cite[Lemma 5.3]{BazzGoe}).

To understand the connecting homomorphism in the induced long exact sequence in cohomology one notes that for a given closed $\sigma\in \Omega^{p-1}(M,\cF_{k+1})$ a preimage under $i_{\xi_{k+1}}:\Omega^*(M,\cF_k)^{T_{k+1}}\to \Omega^{*-1}(M,\cF_{k+1})$ is given by $\eta_{k+1}\wedge \sigma$. This implies that $\delta([\sigma]) = [d(\eta_{k+1}\wedge \sigma)] = [\omega\wedge \sigma]$. If $k<s-1$, then $\omega=d\eta_s$ is exact in $\Omega^*(M,\cF_{k+1})$, so that the connecting homomorphism vanishes. For $k=s-1$ the form $\omega$ is not exact.
\end{proof}

We denote by $\Lambda(\RR^{s-1})$ the exterior algebra on $s-1$ generators, with generators in degree one. There is a natural homomorphism $\Lambda(\RR^{s-1})\to H^*(M)$ sending the standard basis vector $e_i$ to $[\eta_i-\eta_s]$, introducing on $H^*(M)$ the structure of a $\Lambda(\RR^{s-1})$-algebra.

\begin{theorem}\label{thm:cohomproduct}
 There is an isomorphism of $\Lambda(\RR^{s-1})$-algebras
\[
H^*(M)\cong \Lambda(\RR^{s-1})\otimes H^*(M,\cF_{s-1}).
\]
\end{theorem}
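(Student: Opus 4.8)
The plan is to build the isomorphism by induction on the foliations $\cF_k$, combining the short exact sequences from Proposition~\ref{prop:gysinsequences} into an explicit $\Lambda(\RR^{s-1})$-linear map. The key structural input is that for each $k=0,\ldots,s-2$ the connecting homomorphism vanishes (because $\omega=d\eta_s$ is basic-exact for $\cF_{k+1}$ when $k+1\le s-1$), so the short exact sequence $0\to H^*(M,\cF_{k+1})\to H^*(M,\cF_k)\to H^{*-1}(M,\cF_{k+1})\to 0$ is an honest extension with trivial connecting map. I would first argue that each such sequence splits, and in fact splits multiplicatively via wedging with the class $[\eta_k-\eta_s]$ (or $[\eta_{k+1}]$, suitably normalized). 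Concretely, on $\Omega^*(M,\cF_k)^{T_{k+1}}$ the operator $i_{\xi_{k+1}}$ is surjective onto basic forms for $\cF_{k+1}$, and $\eta_{k+1}\wedge(-)$ provides a preimage; passing to cohomology this exhibits $H^*(M,\cF_k)$ as a free module of rank two over $H^*(M,\cF_{k+1})$ with basis $\{1,[\eta_k-\eta_s]\}$.

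Second, I would assemble these rank-two extensions. Starting from $H^*(M,\cF_{s-1})$ and descending through $k=s-2,s-3,\ldots,0$, each step tensors on one exterior generator, giving at $k=0$ (where $\cF_0$ is the foliation by points, so $H^*(M,\cF_0)=H^*(M)$) an isomorphism of graded vector spaces
\[
H^*(M)\cong \Lambda(\RR^{s-1})\otimes H^*(M,\cF_{s-1}).
\]
The generator $e_i$ of $\Lambda(\RR^{s-1})$ should correspond to the class $[\eta_i-\eta_s]$ introduced before the theorem; since $d(\eta_i-\eta_s)=\omega-\omega=0$ these are genuine closed classes, and the products $[\eta_{i_1}-\eta_s]\wedge\cdots\wedge[\eta_{i_r}-\eta_s]$ furnish the image of $\Lambda(\RR^{s-1})$ inside $H^*(M)$. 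I would then check that this map is not merely a vector-space isomorphism but a ring map: the multiplicativity follows because the splitting at each stage is realized by wedging with explicit closed one-forms, and wedge products of the $\eta_i-\eta_s$ behave exactly like the generators of the exterior algebra (in particular each squares to zero in cohomology because $(\eta_i-\eta_s)\wedge(\eta_i-\eta_s)=0$ already on the form level).

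Third, I would verify $\Lambda(\RR^{s-1})$-linearity, which is essentially automatic once the generators are matched as above, and confirm that the factor $H^*(M,\cF_{s-1})$ sits inside $H^*(M)$ as the image of basic cohomology under the inclusion $\Omega^*(M,\cF_{s-1})\hookrightarrow\Omega^*(M)$ (or rather the composite of the stagewise inclusions). The main obstacle I anticipate is not the vector-space count—that is forced by the vanishing connecting maps—but rather establishing the \emph{multiplicative} (ring-theoretic) nature of the splitting uniformly across all stages, i.e.\ showing that the chosen preimages $\eta_{k+1}\wedge(-)$ can be organized so that the total map respects products and not just the additive structure. The cleanest way to control this is to work with the actual forms $\eta_i-\eta_s$ rather than abstract splittings: since these are globally defined closed one-forms whose contractions with the $\xi_\alpha$ are constants, the subalgebra they generate in $\Omega^*(M)$ is genuinely an exterior algebra, and I would prove that the multiplication map from this subalgebra tensored with the basic forms of $\cF_{s-1}$ is a quasi-isomorphism onto the whole de Rham complex by comparing it stage by stage with the Gysin extensions, using the five lemma at each inductive step to promote the stagewise isomorphisms to the full statement.
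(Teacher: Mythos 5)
Your proposal is correct and rests on the same two ingredients as the paper's proof: the Gysin-type short exact sequences of Proposition~\ref{prop:gysinsequences} with vanishing connecting maps for $k\le s-2$, and the closed one-forms $\eta_i-\eta_s$ as the images of the exterior generators. The mechanics differ, though. You build the isomorphism inductively, splitting each extension $0\to H^*(M,\cF_{k+1})\to H^*(M,\cF_k)\to H^{*-1}(M,\cF_{k+1})\to 0$ multiplicatively by wedging with a closed one-form and then promoting the stagewise identifications with the five lemma; the paper instead writes down the single algebra map $\Lambda(\RR^{s-1})\otimes H^*(M,\cF_{s-1})\to H^*(M)$ at once, proves injectivity by choosing a $T$-invariant representative $\sigma$ and applying $i_{\xi_{s-1}}\circ\cdots\circ i_{\xi_1}$ to $[(\eta_1-\eta_s)\wedge\cdots\wedge(\eta_{s-1}-\eta_s)\wedge\sigma]$ to recover $[\sigma]$, and then gets surjectivity for free from the dimension count $\dim_\RR H^*(M)=2^{s-1}\dim_\RR H^*(M,\cF_{s-1})$ that the short exact sequences force. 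The paper's route sidesteps exactly the point you flag as the main obstacle (organizing the splittings compatibly with products), since multiplicativity of its map is immediate from working with the fixed closed forms $\eta_i-\eta_s$, and the ``hard'' half of bijectivity is reduced to a one-line contraction computation; your route gives slightly more information along the way (the explicit rank-two module structure of $H^*(M,\cF_k)$ over $H^*(M,\cF_{k+1})$ at each stage) at the cost of checking commutativity of the comparison diagrams. Two small corrections: the splitting of the $k$-th sequence is by $\eta_{k+1}-\eta_s$, not $\eta_k-\eta_s$ (so the basis is $\{1,[\eta_{k+1}-\eta_s]\}$), and note that $\eta_{k+1}$ alone is not closed ($d\eta_{k+1}=\omega$), so the subtraction of $\eta_s$ is essential for the splitting to be defined on cohomology, as your ``suitably normalized'' tacitly acknowledges.
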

\begin{proof}
 The exact sequences in Proposition \ref{prop:gysinsequences} imply that the natural map $H^*(M,\cF_{s-1})\to H^*(M)$ is injective. We claim that $H^*(M,\cF_{s-1})$ generates $H^*(M)$ freely as a $\Lambda(\RR^{s-1})$-algebra. 

To see that the $\Lambda(\RR^{s-1})$-algebra morphism $\Lambda(\RR^{s-1})\otimes H^*(M,\cF_{s-1})\to H^*(M)$ is injective it suffices to show that for nonzero $[\sigma]\in H^*(M,\cF_{s-1})$ the element
\[
[(\eta_1-\eta_s)\wedge \cdots \wedge (\eta_{s-1}-\eta_s)\wedge\sigma]\in H^*(M)
\]
is nonzero. We can assume that the representative $\sigma$ is invariant under the torus $T$ generated by the flow of $\xi_1,\ldots,\xi_s$. Applying the composition $i_{\xi_{s-1}}\circ \cdots \circ i_{\xi_1}:H^*(M)\to H^*(M,\cF_1)\to \cdots \to H^*(M,\cF_{s-1})$ to this class, we get back the original nonzero element $[\sigma]\in H^*(M,\cF_{s-1})$. Hence the homomorphism is injective.

Surjectivity follows for dimensional reasons: the short exact sequences in Proposition \ref{prop:gysinsequences} imply that $\dim_{\RR} H^*(M) = 2^{s-1}\dim_{\RR} H^*(M,\cF_{s-1})$.
\end{proof}

\begin{remark}
In \cite[Corollary~4.3]{lotta-dileo} Dileo--Lotta showed the non-existence of simply connected, compact $S$-manifolds with $s\geqslant 2$. (Note that obviously for $s=1$ the result does not hold, as any odd dimensional sphere admits a Sasakian structure.) Theorem \ref{thm:cohomproduct} implies the same statement for metric $f$-$K$-contact manifolds:
\end{remark}
 \begin{corollary}
There are no compact, simply connected, metric $f$-$K$-contact manifolds $(M^{2n+s},f,\xia,\etaa,g)$, with $s\geq 2$.
             \end{corollary}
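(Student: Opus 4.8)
The plan is to exploit the cohomological splitting of Theorem~\ref{thm:cohomproduct} to detect a nonzero first Betti number, which is incompatible with simple connectivity. Recall the standard fact that if $M$ is compact and simply connected, then $\pi_1(M)=0$, so by the Hurewicz theorem $H_1(M;\mathbb{Z})=0$, and hence $H^1(M)=H^1(M;\RR)=0$ by the universal coefficient theorem. The whole argument then reduces to showing that, for $s\geq 2$, the first real cohomology of a compact metric $f$-$K$-contact manifold is nonzero.

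To do this I would simply read off the degree-one part of the isomorphism
\[
H^*(M)\cong \Lambda(\RR^{s-1})\otimes H^*(M,\cF_{s-1})
\]
provided by Theorem~\ref{thm:cohomproduct}. Since the generators of $\Lambda(\RR^{s-1})$ live in degree one, the degree-one component splits as
\[
H^1(M)\cong \bigl(\Lambda^1(\RR^{s-1})\otimes H^0(M,\cF_{s-1})\bigr)\oplus H^1(M,\cF_{s-1}).
\]
As $M$ is connected, the closed basic $0$-forms for $\cF_{s-1}$ are exactly the constants, so $H^0(M,\cF_{s-1})=\RR$, and the first summand is isomorphic to $\Lambda^1(\RR^{s-1})\cong\RR^{s-1}$. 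Consequently $\dim_\RR H^1(M)\geq s-1$. For $s\geq 2$ this forces $\dim_\RR H^1(M)\geq 1$, contradicting $H^1(M)=0$, and so no such $M$ can exist.

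I do not expect any genuine obstacle here: all of the substance is already packaged in Theorem~\ref{thm:cohomproduct}, and this corollary is essentially the observation that the classes $[\eta_i-\eta_s]$ appearing there are nonzero in $H^1(M)$. In fact one could dispense with the Betti-number bookkeeping entirely and invoke the opening Lemma of this section, which already exhibits $[\eta_1-\eta_2]\neq 0$ in $H^1(M)$ whenever $s\geq 2$; I would mention this as the most economical route, while phrasing the main argument through Theorem~\ref{thm:cohomproduct} to match the remark preceding the statement.
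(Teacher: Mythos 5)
Your argument is correct and is exactly the one the paper intends: the remark preceding the corollary says it follows from Theorem~\ref{thm:cohomproduct}, and reading off the degree-one part (or, as you note, just citing the opening Lemma of the section showing $[\eta_\alpha-\eta_\beta]\neq 0$) gives $\dim_\RR H^1(M)\geq s-1>0$, contradicting simple connectivity. Nothing to add.
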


\begin{remark}
One can derive a cohomological splitting, similar to Theorem \ref{thm:cohomproduct}, from a Theorem of Chevalley, see \cite[\S IX.2, Theorem I]{Greub}. Concretely, given a compact metric $f$-$K$-contact manifold $(M,f,\xi_1,\dots \xi_s,\eta_1,\dots, \eta_s,g)$, we consider the Abelian Lie algebra 
\[
\mathfrak{g}= \{ \sum_{\alpha=1}^s a_\alpha \xi_\alpha \;|\; \sum_{\alpha=1}^s a_\alpha=0, \, a_\alpha\in \mathbb{R}\}
\]
as well as the foliation $\bar{\cF}$ it defines.

As the $\xi_{\alpha}$ are commuting Killing vector fields and $g(\xi_\alpha,\xi_\beta)=\delta_{\alpha}^{\beta}$ is constant for all $\alpha$ and $\beta$, we can apply \cite[Corollary~2.20]{deNicola} and obtain an algebraic connection $\chi: \mathfrak{g}^*\rightarrow \Omega^1(M)$ for the action of $\mathfrak{g}$: for an orthonormal basis $\{e_i\}$ of $\mathfrak{g}$ with dual basis $\{e^i\}$, we have
\[
\chi(e^i) = g(e_i,\cdot).
\]
Then, \cite[Theorem~2.21]{deNicola} yields a quasi-isomorphism of CDGAs
\[\Lambda (\mathfrak{g}^*)\otimes \Omega^*(M,{\bar{\cF}}) \longrightarrow \Omega_{\mathrm{bas}\, {\mathfrak{g}}}^*(M),
\]
where $\Omega_{\mathrm{bas}\, {\mathfrak{g}}}^*(M):=\{\omega\in \Omega(M) \;|\; \mathcal{L}_X\omega=0 \text{ for all } X \in \mathfrak{g}\}$ is the subcomplex of $\mathfrak{g}$-basic forms on $M$. Here, we consider on $\Omega_{\mathrm{bas}\, {\mathfrak{g}}}^*(M)$ the standard differential; see \cite[Sections 2.2 and 2.5]{deNicola} for the definition of the differential $d_{\bar{\chi}}$ on $\Lambda (\mathfrak{g}^*)\otimes \Omega^*(M,{\bar{\cF}})$. In our setting, as the forms $g(e_i,\cdot)$ are linear combinations of the closed one-forms $\eta_\alpha$, they are closed; hence, $\bar{\chi}=d\circ \chi=0$ and thus the differential $d_{\bar{\chi}}$ is just $1\otimes d$. This implies, together with the fact that the inclusion $\Omega_{\mathrm{bas}\, {\mathfrak{g}}}^*(M)\rightarrow \Omega^*(M)$ induces an isomorphism in cohomology (see \cite[\S 9.1, Theorem 1]{onishchik}), that we obtain an isomorphism
\[
\Lambda({\mathfrak{g}}^*)\otimes H^*(M,\bar{\cF})\longrightarrow H^*(M).
\]
\end{remark}
             
%
%
%
%
%

\section{Morse theory on metric $f$-$K$-contact manifolds}\label{sec:Morse}

In this section we construct, on any compact metric $f$-$K$-contact manifold, a Morse-Bott function whose critical set is the union of the closed leaves of the characteristic foliation. The construction and proof goes along the same lines as in the $K$-contact case, see \cite[Section 4]{rukimbira}. 

Let $(M^{2n+s},f,\xia, \etaa, g)$ be compact metric $f$-$K$-contact manifold. Observe that, as $\xi_1, \dots,\xi_s$ commute with each other, the closure in the isometry group $\text{Isom}(M,g)$ of $M$ of the subgroup generated by the flow of the characteristic vector fields  
\begin{equation*}
 T:=\overline{\langle\exp(t_1\xi_1),\dots, \exp(t_s\xi_s)\rangle}
\end{equation*}
is a connected, abelian Lie subgroup of $\text{Isom}(M,g)$, which is also compact being $M$ compact by hypothesis; hence $T$ is a torus. Let $Z\in Lie(T)=:\mft$ be a generic element, in the following sense: in every point $p\in M$ the isotropy Lie algebra $\mft_p$ is of dimension at most $\dim T - s$, as the elements $\xi_\alpha$ are never contained in it. We define
\[
\tilde \mft_p:= \mft_p \oplus \bigoplus_{\alpha} \RR  \xi_\alpha.
\]
As $M$ is compact, there are in total only finitely many distinct subspaces $\tilde \mft_p\subset \mft$. We choose $Z$ to satisfy
\[
Z\in \mft\setminus \bigcup_{p:\, \tilde\mft_p\neq \mft} \tilde\mft_p;
\]
note that this condition is void in case $\dim T=s$.

The fact that $[\xia,\xib]=0$ (see Equation \ref{ab}) implies the invariance of $\etaa$ under the flow of $\xib$ for each $\alpha$, $\beta \in \{1,\dots,s\}$; then, by continuity, any $\etaa$ is preserved by the $T$-action on $M$. In particular
\begin{equation}\label{LZ}
 \mathcal{L}_Z\etaa=0,
\end{equation}
for each $\alpha \in \{1,\dots,s\}$.

\medskip
Consider the real-valued map $S:M\rightarrow \mathbb{R}$, $p\mapsto \etaa(Z)(p)$. Using \eqref{LZ}, we have:
\begin{equation*}
 (\di S)(p)=\di(i_Z{\etaa})(p)=(i_Z\di{\etaa})(p)= \di \etaa(Z_p,\cdot).
\end{equation*}
Thus the critical set $C$ of $S$ consists of the points $p\in M$ such that $Z_p\in \bigoplus_{\alpha}\mathbb{R}(\xia)_p$. Observe that by our choice of $Z$ we have 
\[
C=\{p\in M \;|\; \dim T\cdot p=s\},
\]
which is the same as the union of the closed leaves of the characteristic foliation $\cF$ of $M$.

\begin{lemma}
 Let $N$ be a connected component of $C$ and $p\in N$. Consider the Killing vector field 
 \begin{equation}\label{delta}
  \delta=Z-\summ \ka \xia,
 \end{equation}
where $k_{\alpha}=\etaa(Z)(p)$, $\alpha\in \{1,\dots, s\}$, which vanishes along $N$. Then for all $v,w\in T_pM$ perpendicular to $N$ we have:
 \begin{enumerate}[(i)]
  \item $\nabla_v Z=-kf(v)+\nabla_v\delta$, where $\nabla_v\delta$ is a nonzero tangent vector perpendicular to $N$ and $k=\summ k_{\alpha}$. \label{i}
  \item $Hess_S(p)(v,w)=2g(R(\xia,v)Z_p,w)+2g(f(\nabla_vZ),w)$.  \label{ii}
  \item $Hess_S(p)(v,f(\nabla_v\delta))=2g(\nabla_v\delta, \nabla_v\delta)$.
  Therefore the Hessian of $S$ along $N$ is nondegenerate in directions perpendicular to $N$.
 \end{enumerate}
\end{lemma}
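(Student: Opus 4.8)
The plan is to reduce all three parts to the linear algebra of the skew‑adjoint endomorphism $A:=\nabla\delta|_p$ of $T_pM$, exploiting that $\delta$ is a Killing field whose zero set has $N$ as the connected component through $p$. First I would record the standing facts at $p$. Metric compatibility gives $\etaa=g(\cdot,\xia)$, so $\ker f_p=\spn\{(\xia)_p\}$ is $g$‑orthogonal to $\im f_p$ and $f$ is skew‑adjoint. Since the closed leaf through $p$ is tangent to the $\xia$, we have $\ker f_p\subseteq T_pN$; hence any $v\perp N$ satisfies $\eta_\gamma(v)=0$ for all $\gamma$, so $v\in\im f_p$ and $f^2v=-v$, and the normal space $\nu_pN$ lies in $\im f_p$. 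Because $Z$ and the $\xia$ all lie in the abelian Lie algebra $\mft$, the field $\delta=Z-\summ\ka\xia$ is Killing; it vanishes precisely on $N$ near $p$ (its zero set is contained in $C$ and contains $N$), so by the standard description of zeros of Killing fields $A=\nabla\delta|_p$ is skew‑adjoint with $\ker A=T_pN$, hence restricts to an isomorphism of $\nu_pN=\im A$.

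For (i) I would write $Z=\delta+\summ\ka\xia$ and differentiate using \eqref{Xxi}: $\nabla_vZ=\nabla_v\delta+\summ\ka\nabla_v\xia=\nabla_v\delta-k f(v)$, with $k=\summ\ka$. As $\nabla_v\delta=Av\in\im A=\nu_pN$ and $A$ is injective there, this is normal to $N$ and nonzero whenever $v\neq0$.

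For (ii) the point is to identify the gradient. Differentiating $S=g(Z,\xia)$ and using that $Z$ is Killing together with $\nabla_{\xia}Z=\nabla_Z\xia=-fZ$ (from $[\xia,Z]=0$) and \eqref{Xxi} produces $\di S=2g(fZ,\cdot)$, i.e. $\nabla S=2fZ$; note this vanishes on $C$, confirming $p$ is critical. At the critical point $Hess_S(p)(v,w)=g(\nabla_v\nabla S,w)=2g(\nabla_v(fZ),w)$; expanding $\nabla_v(fZ)=(\nabla_vf)Z_p+f\nabla_vZ$ and rewriting $(\nabla_vf)Z_p=R(\xia,v)Z_p$ by \eqref{l} gives the stated formula.

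Finally, for (iii) I would evaluate the curvature term explicitly. Writing $Z_p=\sum_{\beta=1}^s k_\beta(\xib)_p$ and using \eqref{l} in the form $R(\xia,v)\xib=(\nabla_vf)\xib=f^2v=-v$ (valid since $f\xib\equiv0$, $\nabla_v\xib=-fv$ and $f^2v=-v$), I get $R(\xia,v)Z_p=-k v$. Together with $f\nabla_vZ=kv+f\nabla_v\delta$ from (i), the corrections cancel and $\nabla_v(fZ)=f\nabla_v\delta$, so $Hess_S(p)(v,w)=2g(f\nabla_v\delta,w)$; taking $w=f\nabla_v\delta$ and using $f^2\nabla_v\delta=-\nabla_v\delta$ yields $Hess_S(p)(v,f\nabla_v\delta)=2g(\nabla_v\delta,\nabla_v\delta)$. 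The main obstacle is the nondegeneracy conclusion, which requires $f\nabla_v\delta$ to be normal to $N$, i.e. that $f$ preserve $\nu_pN$. For this I would prove $[f,A]=0$ at $p$: since the $\xia$ are Killing we have $\ha=0$ by Theorem~\ref{th killing}, so their flows, and hence $T$, preserve $f$ and $\mathcal{L}_Zf=0$; expanding this identity and using $\nabla_{\xia}f=0$ (so that $(\nabla_Zf)|_p=\summ\ka(\nabla_{\xia}f)|_p=0$) gives $[f,\nabla Z|_p]=0$, whence $[f,A]=0$ because $A=\nabla Z|_p+k f$. Then $f$ preserves $\nu_pN=\im A$, so $f\nabla_v\delta\in\nu_pN$; as $\nabla_v\delta\neq0$ for $v\neq0$, we conclude $Hess_S(p)(v,f\nabla_v\delta)=2|\nabla_v\delta|^2>0$, proving nondegeneracy on the normal space.
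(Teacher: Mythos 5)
Your proposal is correct, and the two Hessian computations (ii) and (iii) coincide in substance with the paper's: you both identify $\nabla S=2fZ$ (the paper reaches $W(S)=2g(W,fZ)$ via parallel extensions rather than via $g(\nabla_v\nabla S,w)$), use \eqref{l} to convert $(\nabla_vf)Z$ into the curvature term, and exploit the identity $R(\xia,v)\xib=-v$ so that the $k$-terms cancel. Where you genuinely diverge is in part (i) and in the final nondegeneracy step. For (i), the paper shows $\nabla_v\delta\neq 0$ by a Jacobi-field argument along the geodesic with initial velocity $v$ and gets perpendicularity from the Killing antisymmetry $g(\nabla_v\delta,X)=-g(\nabla_X\delta,v)$ together with $N$ being totally geodesic; you instead package everything into the skew-adjoint endomorphism $A=\nabla\delta|_p$ with $\ker A=T_pN$ and $\im A=\nu_pN$, which is cleaner but rests on the same two inputs (Kobayashi's description of Killing zeros and the genericity of $Z$ guaranteeing that $\delta$ vanishes only on $N$ near $p$). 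For (iii), you add an argument the paper omits: that $f(\nabla_v\delta)$ actually lies in $\nu_pN$, proved via $\mathcal L_Zf=0$ and $\nabla_{\xia}f=0$ giving $[f,A]=0$. This is a worthwhile clarification, though not strictly needed: since $N\subset\operatorname{Crit}(S)$, the Hessian at $p$ annihilates $T_pN$ in either argument, so one may replace $f(\nabla_v\delta)$ by its normal component without changing the value $2g(\nabla_v\delta,\nabla_v\delta)\neq 0$; either way nondegeneracy on $\nu_pN$ follows.
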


\begin{proof}
The $T$-isotropy Lie algebra is constant along the closed submanifold $N$; in fact, $N$ is equal to a connected component of the fixed point set of $T_p^0$, the identity component of the isotropy group $T_p$. It follows that $\mft = \mft_p \oplus \bigoplus_{\alpha=1}^s {\mathbb{R}} \xi_\alpha$, and the equality $Z=\delta + \sum_{\alpha=1}^s k_\alpha\xi_\alpha$  is precisely the decomposition of $Z$ according to this decomposition of $\mft$. This implies that $\nabla_v Z = -kf(v) + \nabla_v\delta$ (using \eqref{Xxi}) and that $\delta$ vanishes along all of $N$.

If $\nabla_v\delta=0$ then, because also $\delta(p)=0$ and $\delta$ is a Jacobi vector field along the geodesic $\gamma$ with initial velocity $v$, we have that $\delta$ vanishes along $\gamma$. On the other hand, by our choice of $Z$, in a neighborhood of $N$ the vector field $Z$ vanishes only in points of $N$. This implies that $\im(\gamma)\subset N$, contradicting the the fact that $v$ is perpendicular to $N$. To complete the proof of \ref{i}, consider a vector $X$ at $p$ tangent to $N$. As $N$ is a closed totally geodesic submanifold of $M$ (see \cite{kobayashi}) and $\delta$ is a Killing vector field which, restricted to $N$, is tangent to $N$, we have $g(\nabla_v\delta,X)=-g(\nabla_X\delta,v)=0$ for all $X$ tangent to $N$, so that $\nabla_v\delta$ is perpendicular to $N$.

To prove \ref{ii} consider $V$, $W$ local vector fields extending $v$, $w$ by parallel translation along geodesics emanating from $p$. Using  \eqref{Xxi}, \eqref{l} and the fact that $Z$ and $\xia$ are Killing vector fields commuting with each other we obtain:
\begin{equation*}
\begin{aligned}
 Hess_S(p)(v,w) &=VW(S)(p)\\
                &= VW(g(\xia, Z))(p)\\
                &= V( g(\nabla_W\xia,Z)+g(\xia, \nabla_WZ))(p)\\
                &= V(g(-fW, Z)-g(\nabla_{\xia}Z,W))(p)\\
                &= 2V(g(W,fZ))(p)\\
                &= 2(g(\nabla_VW, fZ)+g((\nabla_Vf)(Z),W)+g(W,f(\nabla_VZ)))(p)\\
                &= 2g(R(\xia,v)Z_p,w)+2g(f(\nabla_vZ),w).
\end{aligned}
\end{equation*}
We also compute 
\begin{equation*}
\begin{aligned}
R(\xi_\alpha,v)\xi_\beta &= -R(v,\xi_\alpha)\xi_\beta = (\nabla_v f)\xi_\alpha = -f(\nabla_v \xi_\alpha)= f^2(v) = -v,
\end{aligned}
\end{equation*}
where we used \eqref{l} in the second and \eqref{Xxi} in the fourth equality. The last equality is true because $N$ is $T$-invariant and hence $v$ is contained in the image of $f$. Now, using this information, we continue the computation above, taking $w=f(\nabla_v\delta)$: 
\begin{equation*}
\begin{aligned}
 Hess_S(p)(v,f(\nabla_v\delta)) &= 2g(R(\xia,v)Z_p,f(\nabla_v\delta))+2g(f(\nabla_vZ),f(\nabla_v\delta))\\
                               &= 2g(\sum_{\beta=1}^sk_{\beta}R(\xia,v)\xib,f(\nabla_v\delta))+2g(\nabla_vZ,\nabla_v\delta)\\
                               &= 2kg(fv, \nabla_v\delta)+2g(-kf(v)+\nabla_v\delta,\nabla_v\delta)\\
                               &= 2g(\nabla_v\delta, \nabla_v\delta)\neq 0.
\end{aligned}
\end{equation*}
\end{proof}
In this computation we used that $\delta$ vanishes in $p$ for the second equality,  that $\nabla_v\delta$ is perpendicular to $N$ in the second and third equality, and the identity $\nabla_v Z=-kf(v)+\nabla_v\delta$ from (i) in the third equality.

Therefore it follows:
\begin{proposition}\label{prop:invariantmorsebott}
 The function $S$ is a $T$-invariant Morse-Bott function with critical set $C$.
\end{proposition}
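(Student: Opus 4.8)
The plan is to assemble Proposition~\ref{prop:invariantmorsebott} from the computations preceding the Lemma together with the Lemma itself. Three things must be checked: that $S$ is smooth and $T$-invariant, that its critical set is exactly $C$, and that $C$ is a disjoint union of closed submanifolds along which the Hessian of $S$ is nondegenerate in the normal directions. The first two points are already in hand. Since $T$ is abelian we have $[X,Z]=0$ for every $X\in\mft$, and together with $\cL_X\etaa=0$ this gives $\cL_X S=\cL_X(i_Z\etaa)=i_{[X,Z]}\etaa+i_Z\cL_X\etaa=0$, so that $S$ is $T$-invariant; and the formula $(\di S)(p)=\di\etaa(Z_p,\cdot)$ established above already identifies the critical set of $S$ with $C=\{p\in M\mid \dim T\cdot p=s\}$.

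It therefore remains to verify the Morse-Bott condition, and here I would proceed component by component. Fix a connected component $N$ of $C$ and a point $p\in N$. As noted in the preceding Lemma, $\mft_p$ has the maximal dimension $\dim T-s$ along $C$, and $N$ is a connected component of the fixed point set of the identity component $T_p^0$ of the isotropy group; such a fixed point set of isometries is a closed totally geodesic submanifold (cf.\ \cite{kobayashi}), so that $C$ is indeed a disjoint union of closed submanifolds. Next, because $\delta$ vanishes along $N$ we have $Z=\summ\ka\xia$ there, hence $S=\etaa(Z)=\ka$ is constant on $N$; consequently $\di S$ vanishes identically along $N$, and differentiating in directions tangent to $N$ shows that $T_pN\subseteq\ker Hess_S(p)$.

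To finish I would invoke the Lemma to control the transverse directions. Given any nonzero $v\in T_pM$ perpendicular to $N$, part~(i) guarantees $\nabla_v\delta\neq 0$, and $f(\nabla_v\delta)$ is again perpendicular to $N$ because the normal space of $N$ is $f$-invariant ($N$ being a $T$-invariant fixed point component and $f$ being preserved by $T$, as $\ha=0$ in the metric $f$-$K$-contact case). Then part~(iii) yields $Hess_S(p)(v,f(\nabla_v\delta))=2g(\nabla_v\delta,\nabla_v\delta)>0$, so no nonzero normal vector lies in $\ker Hess_S(p)$. Combined with the previous paragraph this gives $\ker Hess_S(p)=T_pN$, which is exactly the Morse-Bott nondegeneracy condition for the critical manifold $N$. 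Since $N$ was an arbitrary component of $C$ and $S$ is $T$-invariant, this completes the argument.

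I expect the main difficulty to lie not in any new computation but in the structural bookkeeping: one has to justify that the connected components of $C$ really are smooth submanifolds (rather than merely closed subsets), and then match the tangential degeneracy of $Hess_S$ — forced by $S$ being constant on $N$ — against the transverse nondegeneracy supplied by the Lemma, so as to pin down the kernel of the Hessian as exactly $T_pN$. The analytic heart, namely the explicit Hessian formula and its normal nondegeneracy, has already been carried out in the Lemma, so the Proposition is essentially a careful packaging of those ingredients.
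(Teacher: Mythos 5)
Your proposal is correct and follows essentially the same route as the paper, which deduces the Proposition directly from the preceding computations (for the $T$-invariance and the identification of the critical set $C$) together with the Lemma (for the transverse nondegeneracy of the Hessian along each component $N$, which is a fixed-point component of $T_p^0$ and hence a closed submanifold). Your extra bookkeeping — that $S$ is constant on $N$, so $T_pN\subseteq\ker Hess_S(p)$, and that the normal nondegeneracy from part (iii) then pins down $\ker Hess_S(p)=T_pN$ — is exactly the implicit content of the paper's ``Therefore it follows.''
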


\section{Closed leaves of the characteristic foliation} \label{sec:closedleaves}
In this section  we relate the ordinary and basic cohomology of a compact metric $f$-$K$-contact manifold $(M^{2n+s},f,\xia,\etaa,g)$ to the union $C$ of the closed leaves of the characteristic foliation $\cF$. This generalizes results from \cite{rukimbira} and \cite{GoeNozToe}. 

As usual we denote the fundamental $2$-form of $M^{2n+s}$ by $\omega$. The function $S$ considered in Section \ref{sec:Morse} is $\cF$-basic, i.e., constant along leaves of $\cF$. Because of Proposition \ref{prop:invariantmorsebott}, \cite[Theorems 6.3 and 6.4]{GoertschesToeben} are applicable and we obtain:
\begin{theorem}
 We have the following equality of Poincar\'e polynomials:
\[
P_t(M,\cF) = \sum_N t^{\lambda_N} P_t(N,\cF),
\]
where $N$ runs over the connected components of $C$, and $\lambda_N$ is the index of $N$, i.e., the rank of the negative normal bundle of $N$ with respect to $S$.
\end{theorem}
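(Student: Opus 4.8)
The plan is to recognize the claimed identity as a standard consequence of equivariant Morse-Bott theory, transplanted into the basic-cohomology setting, and to verify that the hypotheses of the cited theorems from \cite{GoertschesToeben} are met so that they apply verbatim. By Proposition \ref{prop:invariantmorsebott}, the function $S$ is a $T$-invariant Morse-Bott function on the compact manifold $M$ whose critical set is exactly $C$, the union of the closed leaves of $\cF$. Moreover, $S$ is $\cF$-basic, and the characteristic foliation $\cF$ is a Riemannian foliation whose leaf closures are the orbits of the torus $T$. These are precisely the structural conditions under which \cite[Theorems 6.3 and 6.4]{GoertschesToeben} deliver a basic Morse-Bott inequality, and in fact a \emph{perfect} one. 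So the core of the proof is simply to check that our $(M,\cF,T,S)$ fits that framework and to invoke those results.

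The substantive content to verify is the \emph{perfectness} of the basic Morse-Bott stratification, i.e., that the basic Morse inequalities are equalities. First I would recall that in the transverse/basic setting the relevant cohomology is $H^*(M,\cF)$, and that each critical component $N$ of $S$ (being $T$-invariant and a union of leaves) inherits a foliation, so that $H^*(N,\cF)$ and its Poincaré polynomial $P_t(N,\cF)$ are well defined. The index $\lambda_N$ is the rank of the negative normal bundle of $N$ with respect to the Hessian of $S$; the preceding Lemma established that this Hessian is nondegenerate in directions perpendicular to $N$, so $\lambda_N$ is well defined and the critical components are genuinely nondegenerate in the Morse-Bott sense. The perfectness — which upgrades the inequality $P_t(M,\cF)\leq \sum_N t^{\lambda_N}P_t(N,\cF)$ to an equality — is exactly what \cite[Theorems 6.3 and 6.4]{GoertschesToeben} provide for a $T$-invariant basic Morse-Bott function, using that the negative normal bundles are complex (the Hessian directions come paired by $f$, as the Lemma's computation $Hess_S(p)(v,f(\nabla_v\delta))=2g(\nabla_v\delta,\nabla_v\delta)$ suggests), which forces all indices $\lambda_N$ to be even and rules out cancellation in the Morse complex.

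Concretely, the steps in order are: (1) observe that $S$ is $\cF$-basic and $T$-invariant with critical set $C$; (2) identify each connected component $N$ of $C$ as a $T$-invariant, $\cF$-saturated closed submanifold, so that the pair $(N,\cF)$ has a well-defined basic Poincaré polynomial; (3) confirm that the Hessian normal to $N$ is nondegenerate (from the Lemma) and that its negative eigenspaces assemble into a bundle of even rank, giving a well-defined even index $\lambda_N$; (4) apply \cite[Theorems 6.3 and 6.4]{GoertschesToeben} to conclude the perfect basic Morse-Bott identity $P_t(M,\cF)=\sum_N t^{\lambda_N}P_t(N,\cF)$.

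I expect the main obstacle to be purely expository rather than mathematical: making precise that the framework of \cite{GoertschesToeben} applies, i.e., that our Morse-Bott function is of the type for which their perfectness theorem holds. The genuinely delicate point hidden inside those cited theorems is the evenness of the indices $\lambda_N$, which is what guarantees perfectness; this ultimately rests on the complex structure that $f$ induces on the normal directions to $N$ (the same $f$-pairing visible in part (iii) of the Lemma). Since the statement explicitly invokes \cite[Theorems 6.3 and 6.4]{GoertschesToeben}, the cleanest proof is short: verify the hypotheses and cite. I would therefore keep the argument brief, emphasizing the verification that $C$ is the critical set, that $S$ is $\cF$-basic and $T$-invariant, and that the evenness of the indices (forced by $f$) makes the function basically perfect.
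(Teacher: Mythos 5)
Your proposal takes essentially the same route as the paper: the entire proof there consists of observing that $S$ is an $\cF$-basic, $T$-invariant Morse--Bott function with critical set $C$ (Proposition \ref{prop:invariantmorsebott}) and then citing \cite[Theorems 6.3 and 6.4]{GoertschesToeben}, exactly as you do. One caveat on your parenthetical explanation of the mechanism: the perfectness in those cited theorems does not rest on evenness of the indices $\lambda_N$ (which is neither claimed nor needed), but on the hypothesis that the critical set coincides with the union of the closed leaves, which forces the relevant Thom--Gysin sequences in equivariant basic cohomology to split.
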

Here, $P_t(M,\cF)=\sum t^k \dim H^k(M,\cF)$ is the Poincar\'e series of $H^*(M,\cF)$, and analogously for $(N,\cF)$. In particular we obtain by evaluating this equation at $t=1$:
\begin{corollary}\label{cor:basiccohomclosedleaves}
We have $\dim_{\RR} H^*(M,\cF) = \dim_{\RR} H^*(C,\cF)$. If $C$ consists of only finitely many  closed leaves of $\cF$, then $\dim_{\RR} H^*(M,\cF)$ is equal to the number of closed leaves of $\cF$.
\end{corollary}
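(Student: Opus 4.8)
The plan is to evaluate the Poincar\'e-series identity of the preceding theorem at $t=1$. First I would observe that $P_t(M,\cF)$ specializes at $t=1$ to the total dimension $\dim_{\RR} H^*(M,\cF)=\sum_k \dim H^k(M,\cF)$, and that each weight $t^{\lambda_N}$ becomes $1$, so the theorem immediately yields
\[
\dim_{\RR} H^*(M,\cF) = \sum_N \dim_{\RR} H^*(N,\cF),
\]
where $N$ ranges over the connected components of the critical set $C$.

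Next I would identify the right-hand side with $\dim_{\RR} H^*(C,\cF)$. Since the connected components $N$ of $C$ are pairwise disjoint closed submanifolds of $M$, the restriction of $\cF$ to $C$ splits as a disjoint union of the restrictions $\cF|_N$, and hence the basic de Rham complex of $(C,\cF)$ is the direct sum of those of the $(N,\cF)$. This gives $H^*(C,\cF)\cong \bigoplus_N H^*(N,\cF)$ as graded vector spaces; taking total real dimensions establishes the first assertion $\dim_{\RR} H^*(M,\cF) = \dim_{\RR} H^*(C,\cF)$.

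For the second assertion I would analyze the contribution of a single closed leaf. If $C$ consists of only finitely many closed leaves of $\cF$, then, each leaf being connected, every connected component $N$ of $C$ is exactly one such closed leaf. On $N$ the restricted foliation has $N$ itself as its unique leaf, so every tangent vector of $N$ is tangent to $\cF$; the defining condition $i_X\sigma=0$ for all $X\in\Xi(\cF)$ then forces every basic form of positive degree to vanish, while in degree zero the basic functions are precisely the constants. Hence $H^*(N,\cF)\cong\RR$, concentrated in degree $0$, so $\dim_{\RR} H^*(N,\cF)=1$. Summing over the (finitely many) components, $\dim_{\RR} H^*(M,\cF)$ equals the number of closed leaves of $\cF$.

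The whole argument is a direct specialization of the theorem, so I do not anticipate a genuine obstacle. The only step warranting care is the computation $H^*(N,\cF)\cong\RR$ for a closed leaf, which rests on the observation that along such a leaf the characteristic foliation fills the entire tangent space; everything else is bookkeeping with Poincar\'e series and the additivity of basic cohomology over the disjoint components of $C$.
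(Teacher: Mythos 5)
Your proposal is correct and follows exactly the paper's route: the corollary is obtained by evaluating the Poincar\'e polynomial identity of the preceding theorem at $t=1$, and your additional bookkeeping (additivity of basic cohomology over the components of $C$, and the computation $H^*(N,\cF)\cong\RR$ for a closed leaf $N$) just makes explicit what the paper leaves implicit.
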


Recall that an $s$-form $\eta$ on a foliation $\cF$ of leaf dimension $s$ is called \emph{relatively closed} if $d\eta(v_1,\ldots,v_{s+1})=0$ whenever $s$ of the $s+1$ tangent vectors $v_i$ are tangent to $\cF$. It is well-known that for a relatively closed $s$-form $\eta$ on an oriented manifold $M$ the natural map
\[
\int_{\cF,\eta}:\Omega(M,\cF)\longrightarrow \RR;\, \sigma\longmapsto \int_M \eta\wedge\sigma
\]
descends to a map $H^*(M,\cF)\to \RR$, see e.g.\ \cite[Proposition 3.5]{GoertschesNozawaToeben}.

\begin{lemma}\label{lem:nontrivialelementsbasiccohom}
For $k=0,\ldots,n$, the form $\omega^k$ defines a nonzero element in $H^{2k}(M,\cF)$.
\end{lemma}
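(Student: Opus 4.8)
First I need to verify that $\omega^k$ is actually a basic form for the characteristic foliation $\cF$, so that it defines a class in $H^*(M,\cF)$ at all. Since $\omega = d\eta_\alpha$ and $i_{\xi_\alpha}\omega = i_{\xi_\alpha}d\eta_\alpha = 0$ (recall the $\xi_\beta$ are dual to the $\eta_\alpha$ and satisfy $i_{\xi_\beta}d\eta_\alpha=0$), we have $i_{\xi_\alpha}\omega=0$ for all $\alpha$; moreover $\cL_{\xi_\alpha}\omega = d\, i_{\xi_\alpha}\omega + i_{\xi_\alpha}d\omega = 0$ since $\omega$ is closed. Hence $\omega$, and therefore each power $\omega^k$, lies in $\Omega(M,\cF)$; it is clearly closed, so it represents a class in $H^{2k}(M,\cF)$.

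The plan for nontriviality is to exhibit a linear functional on $H^*(M,\cF)$ that does not vanish on $[\omega^k]$. The natural candidate is the integration map $\int_{\cF,\eta}$ recalled just before the statement: I take the $s$-form $\eta := \eta_1\wedge\cdots\wedge\eta_s$ on the $s$-dimensional foliation $\cF$, and I claim it is relatively closed. To check relative closedness I must verify $d\eta(v_1,\ldots,v_{s+1})=0$ whenever $s$ of the vectors are tangent to $\cF$, i.e.\ lie in $\spn(\xi_1,\ldots,\xi_s)$. Expanding $d(\eta_1\wedge\cdots\wedge\eta_s)$ using $d\eta_\alpha=\omega$ and the fact that $i_{\xi_\beta}\omega=0$ and $i_{\xi_\beta}\eta_\alpha=\delta_\alpha^\beta$ shows the relevant contractions collapse appropriately, giving the relative closedness. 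I would then pair $[\omega^k]$ against a complementary class so that the wedge with $\eta$ fills out a volume form. Concretely, I want to apply $\int_{\cF,\eta}$ not to $\omega^k$ directly but to find a basic class $[\sigma]$ with $\int_M \eta\wedge\omega^k\wedge\sigma\neq 0$, which by Poincaré-type duality for basic cohomology shows $[\omega^k]\neq 0$; the cleanest choice is $\sigma=\omega^{n-k}$, reducing everything to showing $\int_M \eta_1\wedge\cdots\wedge\eta_s\wedge\omega^n\neq 0$.

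The crux is therefore the top-dimensional nonvanishing $\int_M \eta_1\wedge\cdots\wedge\eta_s\wedge\omega^n\neq 0$. This is exactly the nondegeneracy condition $\eta_1\wedge\cdots\wedge\eta_s\wedge(d\eta_1)^n\neq 0$ built into the definition of a metric $f$-contact manifold (it is the pointwise volume-form condition appearing in the construction theorem quoted from \cite{diterlizzi}), so the wedge is a nowhere-vanishing top form on the oriented compact manifold $M$, and its integral is (up to sign, after fixing orientation) positive. Since $\int_{\cF,\eta}$ descends to a well-defined map $H^*(M,\cF)\to\RR$ and sends $[\omega^n]=[\omega^k]\cdot[\omega^{n-k}]$ to this nonzero number, the class $[\omega^k]$ cannot vanish in $H^{2k}(M,\cF)$.

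The main obstacle I anticipate is the relative-closedness verification for $\eta_1\wedge\cdots\wedge\eta_s$: it requires careful bookkeeping of which contractions by $\xi_\beta$ survive in $d(\eta_1\wedge\cdots\wedge\eta_s)=\sum_\alpha (-1)^{\alpha-1}\eta_1\wedge\cdots\wedge\omega\wedge\cdots\wedge\eta_s$, using repeatedly that $i_{\xi_\beta}\omega=0$ while $i_{\xi_\beta}\eta_\alpha=\delta^\beta_\alpha$. Everything else is either a direct consequence of the structure equations or the already-quoted nondegeneracy condition, so this is a routine but slightly delicate combinatorial step rather than a genuine difficulty.
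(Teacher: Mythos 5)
Your proposal is correct and follows essentially the same route as the paper: reduce to $k=n$ by wedging with $\omega^{n-k}$, check that $\eta_1\wedge\cdots\wedge\eta_s$ is relatively closed via the expansion of $d(\eta_1\wedge\cdots\wedge\eta_s)$ and $i_{\xi_\beta}\omega=0$, and conclude from the nonvanishing of $\int_M\omega^n\wedge\eta_1\wedge\cdots\wedge\eta_s$, which is guaranteed by the volume-form condition. The appeal to ``Poincar\'e-type duality'' is unnecessary (and the paper avoids it): one only needs that the integration functional descends to basic cohomology and is nonzero on $[\omega^k]\cdot[\omega^{n-k}]$.
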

\begin{proof}
It suffices to show the claim for $k=n$. The given form is $\cF$-basic and thus defines an element in $H^{2k}(M,\cF)$. To show that this element is nonzero we show that it maps to a nonzero real number under the above integration operator, with respect to an appropriate relatively closed $s$-form.

The form $\eta_{1}\wedge \cdots \wedge \eta_s$ is relatively closed with respect to the foliation $\cF$: we have
\[
d(\eta_{1}\wedge\cdots\wedge \eta_s) = \sum_{i=1}^s (-1)^{i-1}\omega \wedge \eta_1\wedge \cdots \wedge \widehat{\eta_i}\wedge \cdots \wedge \eta_s,
\]
and $\omega$ vanishes on vector fields tangent to $\cF$. Note also that $M$ admits a natural orientation induced by the volume form $\omega^n\wedge \eta_1\wedge\cdots \wedge\eta_s$. Then
\begin{align*}
\int_{\cF,\eta_{1}\wedge\cdots \wedge \eta_s} \omega^n = \int_M \omega^n\wedge \eta_1\wedge \cdots \wedge \eta_s\neq 0.
\end{align*}
\end{proof}
For $s=1$, i.e., in the $K$-contact setting, the following theorem was known previously -- the statement about the minimal number of closed leaves generalizes \cite[Corollary 1]{rukimbira}, and the equivalence of the four conditions results from \cite{GoeNozToe}. 
\begin{theorem}\label{thm:closedleaves}
The characteristic foliation of a compact metric $f$-$K$-contact manifold $M^{2n+s}$ has at least $n+1$ closed leaves.  If it has only finitely many closed leaves, then the following conditions are equivalent:
\begin{itemize}
\item The number of closed leaves of $\cF$ is $n+1$.
\item The basic cohomology $H^*(M,\cF)$ is that of $\CC P^n$, i.e., 
\[
H^*(M,\cF)= \RR[\omega]/([\omega^{n+1}]).
\]
\item The basic cohomology $H^*(M,\cF_{s-1})$ is that of a $2n+1$-dimensional sphere.
\item $M$ has the real cohomology ring of $S^{2n+1}\times T^{s-1}$.
\end{itemize}
\end{theorem}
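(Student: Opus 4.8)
The plan is to prove the lower bound first, then establish the chain of equivalences, leaning heavily on the Morse-theoretic machinery and cohomological results already assembled in the excerpt. For the lower bound, I would combine Corollary~\ref{cor:basiccohomclosedleaves} with Lemma~\ref{lem:nontrivialelementsbasiccohom}. The latter shows that $1=\omega^0,\omega^1,\ldots,\omega^n$ define $n+1$ nonzero classes in $H^{0}(M,\cF),H^2(M,\cF),\ldots,H^{2n}(M,\cF)$ respectively; since these live in distinct degrees they are linearly independent, so $\dim_\RR H^*(M,\cF)\geq n+1$. By Corollary~\ref{cor:basiccohomclosedleaves}, $\dim_\RR H^*(M,\cF)=\dim_\RR H^*(C,\cF)$, and when $C$ consists of finitely many closed leaves this common dimension equals the number of closed leaves. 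Hence there are at least $n+1$ closed leaves.

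Now assume finitely many closed leaves. The cleanest logical route is to show the equivalences by proving a cycle of implications. First I would argue that the number of closed leaves equals $n+1$ if and only if $H^*(M,\cF)\cong \RR[\omega]/(\omega^{n+1})$. One direction: if $H^*(M,\cF)$ is the cohomology ring of $\CC P^n$ then it has total dimension $n+1$, so by Corollary~\ref{cor:basiccohomclosedleaves} there are exactly $n+1$ closed leaves. For the converse, if there are exactly $n+1$ closed leaves then $\dim_\RR H^*(M,\cF)=n+1$; but Lemma~\ref{lem:nontrivialelementsbasiccohom} already exhibits $n+1$ linearly independent classes $1,\omega,\ldots,\omega^n$, so these must form a basis, forcing $H^*(M,\cF)=\RR[\omega]/(\omega^{n+1})$ as a ring (the relation $\omega^{n+1}=0$ holds for degree reasons, as $H^{2n+2}(M,\cF)$ must then vanish).

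Next I would connect $H^*(M,\cF)$ with $H^*(M,\cF_{s-1})$ via the long exact Gysin sequence from Proposition~\ref{prop:gysinsequences}, in which the connecting homomorphism is cup product with $\omega$. If $H^*(M,\cF)=\RR[\omega]/(\omega^{n+1})$, then multiplication by $\omega$ is injective in degrees $0,\ldots,2n-2$ and the map $H^{2n-1}\to H^{2n+1}$ is between zero groups; feeding this into the long exact sequence collapses $H^*(M,\cF_{s-1})$ to be nonzero exactly in degrees $0$ and $2n+1$, each one-dimensional, i.e. the basic cohomology of $S^{2n+1}$. Conversely, if $H^*(M,\cF_{s-1})$ is that of $S^{2n+1}$, then reading the same sequence backwards shows that $\cup\,\omega$ is an isomorphism in the intermediate degrees, which forces $H^*(M,\cF)\cong \RR[\omega]/(\omega^{n+1})$. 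Finally, the equivalence with $M$ having the real cohomology of $S^{2n+1}\times T^{s-1}$ follows from the splitting in Theorem~\ref{thm:cohomproduct}: since $H^*(M)\cong \Lambda(\RR^{s-1})\otimes H^*(M,\cF_{s-1})$ as algebras, and $H^*(S^{2n+1}\times T^{s-1})\cong \Lambda(\RR^{s-1})\otimes H^*(S^{2n+1})$ by the Künneth theorem, the condition on $H^*(M)$ is equivalent to $H^*(M,\cF_{s-1})$ being that of $S^{2n+1}$.

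I expect the main obstacle to be the careful bookkeeping in the Gysin argument, specifically verifying that the degrees in the long exact sequence of Proposition~\ref{prop:gysinsequences} align so that the vanishing (or non-vanishing) of $\cup\,\omega$ pins down $H^*(M,\cF_{s-1})$ precisely in degrees $0$ and $2n+1$, and conversely. One must track which connecting map is $\cup\,\omega$ versus the zero maps appearing for $k<s-1$, and confirm that the ring structure—not merely the graded dimensions—is correctly transported across the isomorphisms, since two of the four conditions are statements about cohomology rings rather than just Betti numbers.
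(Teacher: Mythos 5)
Your proposal is correct and follows the same route as the paper: the lower bound and the equivalence of the first two conditions via Corollary~\ref{cor:basiccohomclosedleaves} and Lemma~\ref{lem:nontrivialelementsbasiccohom}, the second and third via the long exact Gysin-type sequence of Proposition~\ref{prop:gysinsequences} with connecting map $\cup\,\omega$, and the third and fourth via Theorem~\ref{thm:cohomproduct}. The paper states these last two steps in one line each, so your write-up simply supplies the diagram chase and dimension bookkeeping that the paper leaves implicit.
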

\begin{proof}
If the number of closed leaves is finite, then it is, by Corollary \ref{cor:basiccohomclosedleaves}, given by $\dim H^*(M,\cF)$. But this vector space contains, by Lemma \ref{lem:nontrivialelementsbasiccohom}, the $n+1$ nontrivial elements $1,[\omega],\ldots,[\omega]^n$. This shows that there are at least $n+1$ closed leaves, and the equivalence of the first and second condition.

The equivalence of the second and third condition follows from the long exact Gysin-type sequence in Proposition \ref{prop:gysinsequences}. The equivalence of the third and fourth condition is Theorem \ref{thm:cohomproduct}.
\end{proof}
\begin{corollary}\label{cor:K}
Let $M^{2n+1}$ be a real cohomology sphere, equipped with a $K$-contact structure with finitely many closed Reeb orbits, and $\phi:M\to M$ an automorphism of the $K$-contact structure. Then $\phi$ sends every closed Reeb orbit to itself. 
\end{corollary}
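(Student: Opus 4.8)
The plan is to pass to the mapping torus $M_\phi$ and to count the closed leaves of its (two-dimensional) characteristic foliation in two different ways: once intrinsically, via the lower bound of Theorem~\ref{thm:closedleaves}, and once by relating them to the $\phi$-orbits of closed Reeb orbits on $M$. Forcing the two counts to agree will show that $\phi$ cannot permute the closed Reeb orbits nontrivially.

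First I would pin down the number of closed Reeb orbits of $M$. Here $M$ is a $K$-contact manifold, i.e.\ a metric $f$-$K$-contact manifold with $s=1$, so the factor $T^{s-1}$ is a point and $S^{2n+1}\times T^{s-1}=S^{2n+1}$. Since $M$ is a real cohomology sphere, the fourth condition of Theorem~\ref{thm:closedleaves} holds; as the Reeb foliation $\cF$ has only finitely many closed leaves by hypothesis, the equivalences of that theorem apply, and the first condition shows that $M$ has exactly $n+1$ closed Reeb orbits. Being an automorphism of the $K$-contact structure, $\phi$ preserves $\xi$ and hence permutes these $n+1$ closed Reeb orbits; the claim to be proved is precisely that this permutation is the identity.

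Next I would form the mapping torus $M_\phi$, which by the construction of Section~\ref{sec:mappingtori} is a compact metric $f$-$K$-contact manifold with $s=2$ and dimension $2n+2$, so with the same value of $n$. Its characteristic foliation is spanned by $\bar\xi_1=\xi-\frac{d}{dt}$ and $\bar\xi_2=\frac{d}{dt}$, equivalently by the lift of $\xi$ and by $\frac{d}{dt}$; hence on $M\times\RR$ its leaves are exactly the cylinders $O\times\RR$, where $O$ ranges over the Reeb orbits of $M$. The generating deck transformation $(p,t)\mapsto(\phi(p),t+1)$ carries $O\times\RR$ to $\phi(O)\times\RR$, so the leaves of $\cF$ on $M_\phi$ correspond bijectively to the $\phi$-orbits of Reeb orbits of $M$, and such a quotient leaf is compact precisely when the underlying Reeb orbit $O$ is closed. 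Consequently the number of closed leaves of $\cF$ on $M_\phi$ equals the number of $\phi$-orbits among the $n+1$ closed Reeb orbits of $M$, and this number is at most $n+1$.

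Finally I would apply the unconditional lower bound of Theorem~\ref{thm:closedleaves} to $M_\phi$, which gives at least $n+1$ closed leaves. Combining the two counts forces the number of $\phi$-orbits of closed Reeb orbits to be exactly $n+1$, i.e.\ equal to the total number of closed Reeb orbits; hence every $\phi$-orbit is a singleton and $\phi$ maps each closed Reeb orbit to itself. The step I expect to be the main obstacle is the middle one: verifying carefully that the closed leaves of the characteristic foliation of $M_\phi$ are in genuine bijection with the $\phi$-orbits of closed Reeb orbits of $M$, and in particular that a quotient leaf $q(O\times\RR)$ is compact if and only if $O$ is a closed Reeb orbit (a nonclosed orbit yielding a noncompact leaf dense in a higher-dimensional torus).
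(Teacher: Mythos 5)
Your proposal is correct and follows essentially the same route as the paper: pass to the mapping torus $M_\phi$, identify its closed characteristic leaves with the $\phi$-orbits of the $n+1$ closed Reeb orbits (the paper phrases this as the components of the mapping torus $C_\phi$ of the critical set), and squeeze this count between the upper bound $n+1$ and the lower bound $n+1$ from Theorem~\ref{thm:closedleaves}. The point you flag as the main obstacle — that a quotient leaf is compact exactly when the underlying Reeb orbit is closed — is exactly the identification the paper also relies on, stated there without further elaboration.
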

\begin{proof}
As shown in Section \ref{sec:mappingtori} the mapping torus $M_\phi$ of $\phi$ naturally is a metric $f$-$K$-contact manifold. Let $C\subset M$ be the union of the closed Reeb orbits of $M$, which are exactly $n+1$ by Theorem \ref{thm:closedleaves}. Then the union of the closed leaves of the characteristic foliation $\cF$ of $M_\phi$ naturally is the mapping torus $C_\phi$, whose number of connected components is bounded from above by $n+1$, with equality if and only if $\phi$ sends every closed Reeb orbit to itself. But on the other hand every closed leaf of $\cF$ is isolated, hence this number of connected components is by Theorem \ref{thm:closedleaves} also bounded from below by $n+1$.
\end{proof}

\begin{example}
It was shown in \cite{GoeNozToe} that there is a $K$-contact structure with exactly four closed Reeb orbits on the $7$-dimensional Stiefel manifold $\SO(5)/\SO(3)$, which is a real cohomology sphere. All (iterated) mapping tori of automorphisms of this example thus satisfy the conditions in Theorem \ref{thm:closedleaves}.
\end{example}


\begin{thebibliography}{99}


\bibitem{alegre-blair-carriazo}
P. Alegre, D. E. Blair and A. Carriazo.
Generalized Sasakian-space-forms. \emph{Israel J. Math.} \textbf{141} (2004), 157--183.

\bibitem{AlvarezKordykov}
J.~Alvarez L\'opez and Y.~Kordykov.
Lefschetz distribution of Lie foliations, {\em in: $C^*$-algebras and elliptic theory II}, Trends in Math., Basel, 2008.

\bibitem{BazzGoe}
G.\ Bazzoni and O.\ Goertsches. Toric actions in cosymplectic geometry. \texttt{arXiv:1712.08141}. To appear in \emph{Forum Math.}

\bibitem{BazzMarrero} G.\ Bazzoni and J.\ C.\ Marrero, On locally conformal symplectic manifolds of the first kind. \emph{Bull.\ Sci.\ Math.} {\bf  143} (2018), 1--57. 

\bibitem{blair70}
D.~E. Blair. Geometry of manifolds with structural group $U(n)\times O(s)$. \emph{J. Differential Geometry} \textbf{4} (1970), 155--167.

\bibitem{blairbook}
D.~E. Blair. \emph{Riemannian Geometry of Contact and Symplectic Manifolds}, Second Edition. Progress in Mathematics \textbf{203}. Birkh\"{a}user, Boston, 2010.

\bibitem{blair-ludden}
D.~E. Blair. and Ludden, G. D. Hypersurfaces in almost contact manifolds. \emph{T\^{o}hoku Math. J.} \textbf{2} (1969), 354--362.

\bibitem{BoyerGalicki}
C.\ Boyer and K.\ Galicki, \emph{Sasakian geometry}, Oxford Mathematical Monographs. Oxford University Press, Oxford, 2008.

\bibitem{cabrerizo-fern}
J. Cabrerizo, L. M. Fern\'{a}ndez and M. Fern\'{a}ndez.
The curvature tensor fields on $f$-manifolds with complemented frames. \emph{An. \c{S}tiin\c{t} Univ. Al. I. Cuza Ia\c{s}i Sec\c{t}. I a Mat.} \textbf{36} (1990), 151--161.

\bibitem{deNicola}
B. Cappelletti-Montano, A. De Nicola, J. C. Marrero, I. Yudin.
Almost formality of quasi-Sasakian and Vaisman manifolds with applications to nilmanifolds \texttt{arXiv:1712.09949}.

\bibitem{carriazo-fernandez}
A. Carriazo, L. M. Fern\'{a}ndez and A. M. Fuentes.
Generalized S-space-forms with two structure vector fields. \emph{Adv. Geom.} \textbf{10} (2010), 205--219.


\bibitem{lotta-dileo}
 G. Dileo and A. Lotta.
On the structure and symmetry properties of almost $S$-manifolds \emph{Geom. Dedicata} \textbf{110} (2005), 191--211.



\bibitem{diterlizzi}
L. Di Terlizzi.
On the curvature of a generalization of contact metric manifolds. \emph{Acta Math. Hungar} \textbf{110} (2006), 225--239.


\bibitem{diterlizzi-konderak}
L. Di Terlizzi, J. Konderak, A. M. Pastore and R. Wolak.
$K$-structures and foliations. \emph{Ann. Univ. Sci. Budapest. E\"{o}tv\'{o}s Sect. Math.} \textbf{44} (2001), 171--182.


\bibitem{duggal-ianus-past}
K. L. Duggal, S. Ianus and A. M. Pastore.
Maps interchanging $f$-structures and their harmonicity. \emph{Acta Appl. Math.} \textbf{67} (2001), 91--115.

\bibitem{GoeNozToe}
O.\ Goertsches, H.\ Nozawa, and D.\ T\"oben. Equivariant cohomology of $K$-contact manifolds, \emph{Math.\ Ann.} {\bf 354} (2012), no.\ 4, 1555--1582. 

\bibitem{GoertschesNozawaToeben}
O.\ Goertsches, H.\ Nozawa, and D.\ T\"oben, Localization of Chern-Simons type invariants of Riemannian foliations, \emph{Israel J.\ Math.} {\bf 222} (2017), no.\ 2, 867--920. 

\bibitem{GoertschesToeben}
O.\ Goertsches and D.\ T\"oben, Equivariant basic cohomology of Riemannian foliations, \emph{J.\ Reine Angew.\ Math.} {\bf 745} (2018), 1--40.

\bibitem{Greub} W.\ Greub, S.\ Halperin, and R.\ Vanstone. {\emph{Connections, curvature, and cohomology. Volume III: Cohomology of principal bundles and homogeneous spaces.}} Pure and Applied Mathematics, Vol. 47-III. Academic Press, New York-London, 1976.

\bibitem{kobayashi}
S.\ Kobayashi. Fixed points of isometries. \emph{Nagoya Math. J.} \textbf{13} (1958), 63--68.


\bibitem{Li} H.\ Li, Topology of co-symplectic/co-K\"ahler manifolds, \emph{Asian J.\ Math.} {\bf 12} (2008), no.\ 4, 527--543. 
 
\bibitem{onishchik} A.\ L.\ Onishchik. \emph{Topology of transitive transformation groups}. Johann Ambrosius Barth Verlag GmbH, Leipzig, 1994.

\bibitem{petersen}
P.\ Petersen.
\newblock {{Riemannian geometry. Third edition. Graduate Texts in Mathematics, 171.}},
\newblock Springer, Cham, 2016.

\bibitem{rukimbira}
P.\ Rukimbira. Topology and closed characteristics of $K$-contact manifolds, \emph{Bull.\ Belg.\ Math.\ Soc.\ Simon Stevin} {\bf 2} (1995), 349--356.


\bibitem{yano}
K. Yano.
On a structure defined by a tensor field f of type $(1,1)$ satisfying $f^3+f=0$. \emph{ Tensor (N.S.)} \textbf{14} (1963), 99--109.





\end{thebibliography}
\end{document}